\documentclass[12pt]{article}
\usepackage{amsmath, amsthm, amssymb}
\usepackage{array,color,colortbl,makecell} 

\theoremstyle{plain}
\newtheorem{theorem}{Theorem}
\newtheorem{lemma}[theorem]{Lemma}
\newtheorem{example}[theorem]{Example}

\newtheorem{corollary}[theorem]{Corollary}

\newcommand{\mcol}[1]{\multicolumn{1}{|c|}{#1}}
\newcommand{\vmcol}[2]{\multicolumn{1}{!{\vrule width#1}c|}{#2}}

\newcommand{\hz}{\hphantom{0}}

\renewcommand{\geq}{\geqslant}
\renewcommand{\leq}{\leqslant}

\def\dfrac#1#2{\lower0.15ex\hbox{\large$\frac{#1}{#2}$}} 

\title{The maximum, supremum and spectrum for critical set sizes in $(0,1)$-matrices}
\author{
Nicholas J. Cavenagh and Liam K. Wright, \\
Department of Mathematics, \\
The University of Waikato, \\
Private Bag 3105, \\
Hamilton 3240, New Zealand \\
\texttt{nickc@waikato.ac.nz} \\
 \texttt{liam.wright479@live.com} \\
}

\begin{document}

\date{}
\maketitle

\begin{abstract}
If $D$ is a partially filled-in $(0,1)$-matrix with
a unique completion to a $(0,1)$-matrix $M$ (with prescribed row and column sums), we say that $D$ is a {\em defining set} for $M$. 
A {\em critical set} is a minimal defining set (the deletion of any entry results in more than one completion).
We give a new classification of critical sets in $(0,1)$-matrices and apply this theory to $\Lambda_{2m}^m$, the set of $(0,1)$-matrices of dimensions $2m\times 2m$ with uniform row and column sum $m$. 

The smallest possible size for a defining set of a matrix in $\Lambda_{2m}^m$ is $m^2$ 
 \cite{Cav}, and the infimum (the largest smallest defining set size for members of $\Lambda_{2m}^m$) is known asymptotically \cite{CR}. 
 We show that no critical set of size larger than $3m^2-2m$ exists in an element of $\Lambda_{2m}^m$ and that there exists a critical set of size $k$ in an element of $\Lambda_{2m}^m$ for each $k$ such that $m^2\leq k\leq 3m^2-4m+2$.   
We also bound the supremum (the smallest largest critical set size for members of $\Lambda_{2m}^m$) between $\lceil (3m^2-2m+1)/2\rceil$ and $2m^2-m$. 
 
\end{abstract}

{\noindent \bf Keywords:} $(0,1)$-matrix, critical set, defining set, frequency square, $F$-square. 

\section{Introduction}
Where convenient, we keep notation consistent with \cite{Bru}.

Let $R=(r_1,r_2,\dots ,r_m)$ and $S=(s_1,s_2,\dots ,s_n)$ be vectors of non-negative integers such that $\sum_{i=1}^m r_i=\sum_{j=1}^n s_j$. 
Then ${\mathcal A}(R,S)$ is defined to be the set of all $m\times n$ $(0,1)$-matrices with 
$r_i$ $1$'s in row $i$ and $s_j$ $1$'s in column $j$, where $1\leq i\leq m$ and $1\leq j\leq n$. 
If $M\in {\mathcal A}(R,S)$, we 
refer to $R$ and $S$ as the row sum and column sum vectors for $M$, respectively.

With $R$ and $S$ as above, we next define $\mathcal A'(R,S)$ to be the set of all $m\times n$ $(0,1,\star)$-matrices with:
\begin{enumerate}
\item at most 
$r_i$ $1$'s in row $i$,
\item  at most $n-r_i$ $0$'s in row $i$,
\item at most $s_j$ $1$'s in column $j$, 
\item at most $m-s_j$ $0$'s in column $j$.
\end{enumerate}
We call a matrix $M\in\mathcal A'(R,S)$ a {\em partial} $(0,1)$-matrix
with row sum vector $R$ and column sum vector $S$. 
If $M_{ij}=\star$ we say that position $(i,j)$ is {\em empty}. 
Indeed, ${\mathcal A}(R,S)\subseteq {\mathcal A}'(R,S)$ and a partial $(0,1)$-matrix is
a $(0,1)$-matrix if and only if it none of its positions are empty (equal to $\star$). 
(Note that our definition of a partial $(0,1)$-matrix allows for the possibility of a matrix 
 $M\in\mathcal A'(R,S)$ which has {\em no} completion to a  
 a $(0,1)$-matrix in ${\mathcal A}(R,S)$.) 

We sometimes consider a partial $(0,1)$-matrix $M$ as a set of triples
$$M=\{(i,j,M_{ij})\mid 1\leq i\leq m, 1\leq j\leq n, M_{ij}\in \{0,1\}\}.$$ 
Note that we naturally omit here the empty positions of $M$. 
It is usually clear by context whether we are considering a 
partial $(0,1)$-matrix as a matrix or as a set of ordered triples.
For example, if we say that $M_1\subseteq M_2$ for two partial $(0,1)$-matrices $M_1$ and $M_2$, we are considering $M_1$ and $M_2$ as sets.
Similarly, the {\em size} of a partial $(0,1)$-matrix $M$ refers to $|M|$ where $M$ is a set (i.e. the number of $0$'s and $1$'s).

With this is mind, 
suppose that $M\in {\mathcal A}(R,S)$ and 
$D\in {\mathcal A}'(R,S)$. 
We say that $D$ is a {\em defining set} for $M$ if
$M$ is 
the unique member of ${\mathcal A}(R,S)$ such that $D\subseteq M$.  
We say that $D$ is a {\em critical set} for $M$ if it is minimal with respect to this property. 
That is, $D$ is a critical set for $M$ if it is a defining set for $M$ and if
$D'\subset D$, then $D'$ is not a defining set for $D$. 

This is analogous to the usual definition of defining sets 
and critical sets 
of Latin squares and other combinatorial designs (\cite{St,Ke}). 

Given a $(0,1)$-matrix $M$, the size of the smallest (respectively, largest) critical set 
in $M$ is denoted by scs$(M)$ (respectively, lcs$(M)$). 
For given row column sum vectors $R$ and $S$, 
scs$({\mathcal A}(R,S))$ 
(respectively, lcs$({\mathcal A}(R,S))$) 
is the size of the smallest 
(respectively, largest) 
critical set for all members of the set ${\mathcal A}(R,S)$. 
More precisely,
$$\mbox{\rm scs}({\mathcal A}(R,S))=\mbox{\rm min}\{\mbox{\rm scs}(M) 
\mid M\in {\mathcal A}(R,S)\},$$
$$\mbox{\rm lcs}({\mathcal A}(R,S))=\mbox{\rm max}\{\mbox{\rm lcs}(M) 
\mid M\in {\mathcal A}(R,S)\}.$$
We also define a type of infimum and supremum, respectively: 
$$\mbox{\rm inf}({\mathcal A}(R,S))=\mbox{\rm max}\{\mbox{\rm scs}(M) 
\mid M\in {\mathcal A}(R,S)\},$$
$$\mbox{\rm sup}({\mathcal A}(R,S))=\mbox{\rm min}\{\mbox{\rm lcs}(M) 
\mid M\in {\mathcal A}(R,S)\}.$$

We henceforth focus on the case when row and column sums are constant. 
To this end, 
let 
$\Lambda_n^{x}$
 be the set of 
  $n\times n$ $(0,1)$-matrices  
with constant row and column sum $x$.

In fact, elements of 
$\Lambda_n^{x}$
 may also be thought of as {\em frequency squares} 
(sometimes {\em $F$-squares}). 
Let $n,\alpha,\lambda_1,\lambda_2,\dots ,\lambda_{\alpha}\in {\mathbb N}$  and  
$\sum_{i=1}^{\alpha} \lambda_i = n$.
A {\em frequency square} or {\em $F$-square} 
$F(n;\lambda_1,\lambda_2,\dots ,\lambda_{\alpha})$ 
of {\em order} $n$ is an $n\times n$ array on symbol set
$\{s_1,s_2,\dots ,s_{\alpha}\}$ such that each cell contains one symbol and 
symbol $s_i$ occurs precisely $\lambda_i$ times in each row and $\lambda_i$ times in each column.
Thus if we let $\alpha=2$, $s_1=1$ and $s_2=0$, the frequency squares  
$F(n;x,n-x)$ are precisely the elements of 
$\Lambda_n^{x}$.

Critical and defining sets of frequency squares were first studied in \cite{FSS}. 
Precise bounds for {\rm scs}$(\Lambda_{n}^{x})$ were obtained in \cite{FSS} for small values of $x$ and 
upper bounds for general $x$, including {\rm scs}$(\Lambda_{2m}^m)\leq m^2$. 
In \cite{Cav} it is shown that {\rm scs}$(\Lambda_{n}^x)\geq$min$\{x^2,(n-x)^2\}$, a corollary of which 
is:
\begin{theorem}
  {\rm scs}$(\Lambda_{2m}^m)= m^2$.
\label{smally} 
\end{theorem}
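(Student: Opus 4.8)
The plan is to obtain the result as a direct combination of the two bounds already quoted in the excerpt: the upper bound {\rm scs}$(\Lambda_{2m}^m)\leq m^2$ from \cite{FSS} and the general lower bound {\rm scs}$(\Lambda_n^x)\geq \min\{x^2,(n-x)^2\}$ from \cite{Cav}. The only genuinely new observation required is that these two bounds coincide in the specific case $n=2m$, $x=m$, so the interval they trap {\rm scs}$(\Lambda_{2m}^m)$ in collapses to a single value.

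First I would specialize the lower bound of \cite{Cav}. Setting $n=2m$ and $x=m$ gives $n-x=2m-m=m$, so $\min\{x^2,(n-x)^2\}=\min\{m^2,m^2\}=m^2$. Hence every defining set, and in particular every critical set, of a matrix in $\Lambda_{2m}^m$ has size at least $m^2$, which yields {\rm scs}$(\Lambda_{2m}^m)\geq m^2$.

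Next I would invoke the upper bound from \cite{FSS}, which asserts {\rm scs}$(\Lambda_{2m}^m)\leq m^2$ (equivalently, that some matrix in $\Lambda_{2m}^m$ admits a defining set of size $m^2$ that is minimal). Since {\rm scs}$(\Lambda_{2m}^m)$ is squeezed between $m^2$ and $m^2$, equality follows immediately.

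There is essentially no obstacle beyond verifying the arithmetic identity $n-x=m$ and observing that it forces the two squares in the lower bound to be equal; all of the substantive work already resides in the two cited theorems. If instead one wanted a self-contained argument, the hard part would be reproving the lower bound of \cite{Cav}, namely showing that any defining set must constrain enough entries to preclude every alternative completion with the same row and column sums; but under the conventions of this excerpt that result may be taken as given, and the corollary is then purely formal.
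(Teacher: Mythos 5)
Your proposal is correct and matches the paper exactly: the paper presents this theorem as an immediate corollary of the lower bound {\rm scs}$(\Lambda_n^x)\geq\min\{x^2,(n-x)^2\}$ from \cite{Cav} combined with the upper bound {\rm scs}$(\Lambda_{2m}^m)\leq m^2$ from \cite{FSS}, which is precisely your squeeze argument.
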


The following theorem was shown via constructing specific $(0,1)$-matrices which do not possess small defining sets. 
\begin{theorem} {\rm (\cite{CR})} 
If $m$ is a power of two, {\rm inf}$(\Lambda_{2m}^m)\geq 2m^2-O(m^{7/4})$. 
\label{mainy}
\end{theorem}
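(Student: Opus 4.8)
The plan is to prove the bound on $\mathrm{inf}(\Lambda_{2m}^m)$ by exhibiting a single matrix $M\in\Lambda_{2m}^m$ with $\mathrm{scs}(M)\ge 2m^2-O(m^{7/4})$; since $\mathrm{inf}(\Lambda_{2m}^m)$ is the \emph{maximum} of $\mathrm{scs}(M)$ over $M\in\Lambda_{2m}^m$, one well-chosen $M$ suffices. The first step is to recast the problem dually. Calling a partial matrix a \emph{trade} if flipping its $1$'s and $0$'s preserves every row and column sum, a set $D$ is a defining set for $M$ precisely when $M\setminus D$ contains no trade; hence $\mathrm{scs}(M)=4m^2-\mu(M)$, where $\mu(M)$ is the size of the largest trade-free subset of the $4m^2$ cells. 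The simplest trades are the \emph{intercalates}, the $2\times2$ submatrices of alternating form $\left(\begin{smallmatrix}1&0\\0&1\end{smallmatrix}\right)$, and since every trade-free set is in particular intercalate-free, I get $\mu(M)\le\mu_{\mathrm{int}}(M)$ and hence the clean lower bound $\mathrm{scs}(M)\ge 4m^2-\mu_{\mathrm{int}}(M)$, where $\mu_{\mathrm{int}}(M)$ is the largest set of cells containing no intercalate. The whole problem reduces to constructing $M$ with $\mu_{\mathrm{int}}(M)\le 2m^2+O(m^{7/4})$.

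The construction is where the hypothesis that $m$ is a power of two enters. I would index the $2m$ rows and columns by vectors over $\mathbb{F}_2$ and define $M$ from a nondegenerate bilinear (Sylvester/Hadamard-type) form, with a normalisation that removes the all-ones line so that every row and column sum is exactly $m$. The point of this choice is that $M$ is \emph{balanced}: for any two distinct rows $i,i'$ both counts $\#\{j:M_{ij}=1,\,M_{i'j}=0\}$ and $\#\{j:M_{ij}=0,\,M_{i'j}=1\}$ are close to $m/2$, with deviations controlled by the spectral gap of the underlying Hadamard structure.

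Now I would analyse an arbitrary intercalate-free set $E$. Writing $a_i$ and $b_i$ for the numbers of $1$-cells and $0$-cells of row $i$ lying in $E$, a pair of rows $i,i'$ produces an intercalate inside $E$ exactly when some column carries an in-$E$ $1$ from row $i$ and an in-$E$ $0$ from row $i'$ \emph{and} a second column carries the reversed pattern; so intercalate-freeness forces, for every pair $\{i,i'\}$, that at least one of these two column-counts vanishes. Starting from the trade-free set of all $2m^2$ ones (where every count vanishes), switching a single $0$-cell into $E$ turns on one count for many pairs and thereby forbids the complementary $0$-cells elsewhere. Quantifying this, I would set up a charging argument in which each added $0$-cell is paid for by an omitted $1$-cell, the balance being mediated by the pairwise counts above. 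Because the construction makes those counts concentrate near $m/2$ with fluctuations of order $m^{3/4}$, the charging is almost perfect, and summing the defect over the $\Theta(m)$ rows yields $\sum_i b_i\le\bigl(2m^2-\sum_i a_i\bigr)+O(m^{7/4})$, that is $|E|=\sum_i(a_i+b_i)\le 2m^2+O(m^{7/4})$. With the reduction above this gives $\mathrm{scs}(M)\ge 2m^2-O(m^{7/4})$.

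The main obstacle is the error analysis in this last step. That no two generic rows can both donate all their $0$-cells is immediate, but bounding the surplus from \emph{partial} donations spread simultaneously across all rows is delicate, and this is exactly where the exponent $7/4$ is forced: one needs each pairwise $1/0$-overlap to deviate from its mean $m/2$ by at most $O(m^{3/4})$, and an expander-mixing-type estimate then converts a per-pair error of $O(m^{3/4})$ into an aggregate error of $O(m^{7/4})$. Establishing this concentration is precisely the role of the power-of-two/Hadamard construction; a secondary technical point is verifying that the normalisation fixing all line sums to $m$ does not spoil the spectral gap.
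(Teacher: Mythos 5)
The paper does not actually prove this statement: it is quoted verbatim from \cite{CR}, so the only comparison available is with that source, whose strategy your outline does echo at the top level (exhibit a single extremal matrix, built from a bilinear form over $\mathbb{F}_2$ when $m$ is a power of two). Your preliminary reductions are all sound: since ${\rm inf}$ is defined as a maximum of ${\rm scs}$, one matrix suffices; by Lemma~\ref{itsdef} and Theorem~\ref{sii}, $D$ is a defining set for $M$ iff $M\setminus D$ is trade-free, so ${\rm scs}(M)=4m^2-\mu(M)$; an intercalate is a cycle, hence a trade, so $\mu(M)\leq\mu_{\mathrm{int}}(M)$; and the pairwise criterion for intercalate-freeness (for every pair of rows, at least one of the two $1/0$-overlap counts inside $E$ must vanish) is correct.

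The gap is that everything after this point --- which is the entire mathematical content of the theorem --- is asserted rather than proved. The bound $\mu_{\mathrm{int}}(M)\leq 2m^2+O(m^{7/4})$ is precisely what needs a proof, and ``I would set up a charging argument \ldots the charging is almost perfect'' does not supply one. Concretely: (i) the construction is never pinned down --- you do not say how the Sylvester/Hadamard-type matrix is normalised so that every row and column sum is exactly $m$, and this matters because the all-ones and all-zeros lines are exactly the degenerate ones; (ii) the claimed concentration ``deviations of order $m^{3/4}$'' is unmotivated --- for an exact Sylvester--Hadamard structure the pairwise row overlaps equal $m/2$ with no fluctuation, so if an $m^{3/4}$ error really drives the exponent it must arise from a part of the argument you have not identified; (iii) the bookkeeping does not add up --- the vanishing condition is imposed on each of the $\Theta(m^2)$ pairs of rows, so summing a per-pair error of $O(m^{3/4})$ gives $O(m^{11/4})$, while summing ``over the $\Theta(m)$ rows'' presupposes a per-row defect bound of $O(m^{3/4})$ that is never derived. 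The genuinely hard case --- sets $E$ that discard some $1$'s in order to admit $0$'s spread across many rows, constrained only by the tournament-type condition --- is exactly where the extremal analysis lives, and the sketch does not engage with it. As written this is a plausible research plan, not a proof.
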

This result is currently being generalized to arbitrary $m$ as part of a more general result in \cite{BLW}.
Given that it is easy to show that any element of $\Lambda_{2m}^m$ has a defining set (and hence critical set) of size at most $2m^2$ (simply take all the $1$'s), this result is in some sense asympotically optimal. 
The analogous question has been considered for Latin squares in \cite{GHM}, where it is shown that every Latin square of order $n$ has a defining set of size at most $n^2-\frac{\sqrt{\pi}}{2}n^{9/6}$ and that for each $n$ there exists a Latin square $L$ with no defining set of size less than $n^2-(e+o(1))n^{10/6}$.   

In this paper we first find a new way of describing a critical set in any $(0,1)$-matrix (Theorem \ref{handier}). 
We then find asymptotically tight bounds for the largest critical set size:
$$3m^2-4m+2 \leq {\rm lcs}(\Lambda_{2m}^m)\leq 3m^2-2m$$
(Corollary \ref{larger}) 
as well as showing the existence of critical sets of each size between $m^2$ and $3m^2-4m+2$  
 (Theorem \ref{spek}). 
Finally, in Sections 4 and 5 we turn our attention to the supremum case, i.e. 
the function 
 {\rm sup}$(\Lambda_{2m}^m)$. We show that
$$ \lceil (3m^2-2m+1)/2\rceil \leq {\rm sup}(\Lambda_{2m}^m)\leq  2m^2-m,$$
 with the lower bound given by Theorem \ref{maineee} and the upper bound given by Theorem \ref{upsup}.

\section{Trades and critical sets in $(0,1)$-matrices}

In this section we list the theory from \cite{Cav} which is relevant to our paper, extending this theory with a new classification of critical sets in $(0,1)$-matrices. 
We define a {\em trade} to be a non-empty partial $(0,1)$-matrix $T$ 
 such that there exists a 
{\em disjoint mate} $T'$ such that:
\begin{itemize}
\item $T_{ij}$ is empty if and only if $T'_{ij}$ is empty;
\item if $T_{ij}$ is non-empty, then $T_{ij}\neq T'_{ij}$;
\item if $1$ appears precisely $k$ times in a row $r$ (column $c$) of $T$, then 
$1$ also appears $k$ times in row $r$ (column $c$) of $T'$;  
\item if $0$ appears precisely $k$ times in a row $r$ (column $c$) of $T$, then 
$0$ also appears $k$ times in row $r$ (column $c$) of $T'$;  
\end{itemize}

We can analyse the properties of critical sets of $(0,1)$-matrices via the trade structure of $(0,1)$-matrices. 
We say that a trade $T$ is a {\em cycle} 
if each row and each column of $T$ contains either $0$ or $2$ non-empty positions.  
A cycle is analogous to what Brualdi calls a
{\em minimal balanced matrix} (\cite{Bru}) and hence we have the following.  
\begin{theorem}{\rm (Lemma $3.2.1$ of \cite{Bru})} 
Any trade $T$ in a $(0,1)$-matrix is a union of disjoint cycles. 
\label{sii}
\end{theorem}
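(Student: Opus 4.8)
The plan is to recast the trade conditions in a more usable form and then peel off cycles one at a time, inducting on the size of the trade. First I would observe that, because all entries lie in $\{0,1\}$, the conditions defining a disjoint mate force $T'_{ij}=1-T_{ij}$ at every non-empty position; the mate is therefore unique and is just the complement of $T$ on its support. Consequently the counting conditions on $1$'s and $0$'s collapse to a single statement, which I would record as the working characterization: \emph{a non-empty partial $(0,1)$-matrix is a trade if and only if every row and every column contains equally many $0$'s and $1$'s} (so in particular an even number of non-empty cells). This makes the cycle condition transparent, since a cycle is then exactly a trade in which each occupied row and column meets precisely two non-empty cells, one $0$ and one $1$.

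Next I would prove by induction on $|T|$ that every trade is a union of disjoint cycles, the inductive step resting on the claim that any non-empty trade contains a cycle. To exhibit one, I would pass to the bipartite incidence structure whose vertices are the occupied rows and columns and whose edges are the non-empty cells, each labelled by its value, and trace an alternating walk: beginning at a $1$-cell $(i_0,j_0)$, move within column $j_0$ to a $0$-cell (which exists by balance), then within that row to a $1$-cell, and so on, so that the edge labels read $1,0,1,0,\dots$. At each vertex reached, balance together with the label of the edge just traversed guarantees an incident edge of the opposite label, so the walk never gets stuck and, the matrix being finite, must eventually revisit a vertex; the segment between the two visits to the first repeated vertex is a closed walk visiting all its intermediate vertices once. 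Since the incidence graph is bipartite this closed walk has even length, so the strictly alternating labels close up consistently, and the resulting simple cycle $C$ meets every row and column it touches in exactly one $0$ and one $1$, i.e.\ $C$ is a cycle in the required sense.

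Finally I would set $T^{\ast}=T\setminus C$ and argue the induction through. Because $C$ removes from each row and column either nothing or exactly one $0$ and one $1$, the remainder $T^{\ast}$ still has equal numbers of $0$'s and $1$'s in every row and every column; hence $T^{\ast}$ is either empty or, by the working characterization, again a trade, of strictly smaller size. Applying the induction hypothesis to $T^{\ast}$ and adjoining $C$ yields the desired decomposition into disjoint cycles. The main obstacle is precisely the cycle-extraction step: one must argue that the alternating walk can always be continued (so a closed alternating trail exists without reusing edges), and that the cycle peeled off at the first repeated vertex is genuinely label-alternating all the way around, so that its removal preserves the balance on which the induction depends.
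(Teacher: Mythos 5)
Your argument is correct, and in fact the paper does not prove this statement at all: it simply observes that a cycle is Brualdi's ``minimal balanced matrix'' and cites Lemma~3.2.1 of \cite{Bru}, whose proof is essentially the alternating-cycle extraction you describe (applied there to balanced $(0,\pm1)$-matrices, which correspond to your reformulation that every line of a trade has equally many $0$'s and $1$'s). Your handling of the one delicate point --- that stopping at the first repeated vertex, together with bipartite parity, forces the labels to alternate all the way around the extracted cycle so that its removal preserves line-balance --- is sound, so the proposal is a complete, self-contained proof along the same lines as the cited source.
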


\begin{lemma}
A partial $(0,1)$-matrix $D$ is a defining set for a $(0,1)$-matrix $M$ if and only if $D\subseteq M$ and 
$|D\cap T|\geq 1$ for every cycle $T\subseteq M$.
\label{itsdef}
\end{lemma}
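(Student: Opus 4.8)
The plan is to prove both implications by their contrapositives, using the single guiding principle that two distinct completions of $M$ differ exactly by a trade, and conversely that swapping a trade inside $M$ yields another completion. Since the definition of a defining set already bakes in $D\subseteq M$, the real content on each side is the cycle condition, and \tref{sii} is what lets me pass between arbitrary trades and the cycles that the statement refers to.

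For the forward direction I would argue the contrapositive: assume $D\subseteq M$ but some cycle $T\subseteq M$ has $D\cap T=\emptyset$, and show $D$ is not a defining set. Let $T'$ be a disjoint mate of $T$ and form $M'=(M\setminus T)\cup T'$, i.e.\ replace the entries of $M$ on the filled positions of $T$ by the corresponding entries of $T'$. Because $T$ and $T'$ carry identical numbers of $1$'s (hence of $0$'s) in every row and every column, $M'$ has the same row and column sums as $M$, so $M'\in\mathcal A(R,S)$; and $M'\neq M$ since $T'_{ij}\neq T_{ij}$ on each filled position. To finish I check $D\subseteq M'$: as $D\subseteq M$ and $T\subseteq M$, any triple of $D$ on a filled position of $T$ would lie in $D\cap T$, which is empty, so every triple of $D$ sits where $M$ and $M'$ agree, giving $D\subseteq M'$. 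Thus $M$ is not the unique completion containing $D$.

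For the converse I would again argue contrapositively: suppose $D\subseteq M$ is not a defining set, so there is $M'\in\mathcal A(R,S)$ with $M'\neq M$ and $D\subseteq M'$, and produce a cycle that $D$ misses. Set $T=\{(i,j,M_{ij})\mid M_{ij}\neq M'_{ij}\}$ and $T'=\{(i,j,M'_{ij})\mid M_{ij}\neq M'_{ij}\}$. These occupy the same non-empty positions, disagree on each, and (since $M$ and $M'$ share all row and column sums while agreeing off these positions) carry equal counts of $1$'s and of $0$'s in each row and column; hence $T$ is a trade with mate $T'$ and $T\subseteq M$. By \tref{sii}, $T$ decomposes into disjoint cycles, so it contains some cycle $C\subseteq T\subseteq M$. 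Every triple $(i,j,M_{ij})\in C$ has $M_{ij}\neq M'_{ij}$ and so lies outside $M'$; combined with $D\subseteq M'$ this forces $D\cap C=\emptyset$, exhibiting a cycle in $M$ not met by $D$.

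I expect the only delicate points to be the two bookkeeping verifications that the configurations in play are genuine trades: that a cycle swap preserves all row and column sums, and that the symmetric difference of two completions preserves the per-row and per-column counts of both $1$'s and $0$'s. Both reduce to the equality of row/column sums for members of $\mathcal A(R,S)$ together with the fact that $M$ and $M'$ agree outside the differing positions. Once these are established, \tref{sii} carries out the reduction from arbitrary trades to cycles and no further machinery is needed.
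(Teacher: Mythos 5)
Your proof is correct, and it is the standard argument underlying this lemma (which the paper imports from \cite{Cav} without proof): the symmetric difference of two completions is a trade, trades decompose into cycles via Theorem~\ref{sii}, and swapping a cycle disjoint from $D$ produces a second completion. Both implications and the two bookkeeping checks you flag are handled correctly, so there is nothing to add.
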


Since a critical set is a minimal defining set, we have the following.

\begin{corollary}
A partial $(0,1)$-matrix $D$ is a critical set for a $(0,1)$-matrix $M$ if and only if it is a defining set for $M$ and  
 for each $(i,j,D_{ij})\in D$, there exists a cycle $T\in M$ 
such that $T\cap D=\{(i,j,D_{ij})\}$. 
\label{itscri}
\end{corollary}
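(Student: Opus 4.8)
The plan is to derive the corollary directly from \lref{itsdef} together with the definition of a critical set as a minimal defining set. The one preliminary observation I would record is a monotonicity property: if $D'\subseteq D''\subseteq M$ and $D'$ is a defining set for $M$, then so is $D''$. This is immediate from \lref{itsdef}, since $|D''\cap T|\geq|D'\cap T|\geq 1$ for every cycle $T\subseteq M$ and $D''\subseteq M$ by hypothesis. The usefulness of monotonicity is that it lets me replace the general minimality requirement (no proper subset of $D$ is a defining set) by the apparently weaker requirement that no single-entry deletion $D\setminus\{e\}$ is a defining set: if some proper subset $D'\subset D$ were defining, then picking any $e\in D\setminus D'$ gives $D'\subseteq D\setminus\{e\}\subseteq M$, whence $D\setminus\{e\}$ would be defining as well.

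With this reduction in hand, I would fix a defining set $D$ and an entry $e=(i,j,D_{ij})\in D$, and characterize exactly when $D\setminus\{e\}$ fails to be defining. Since $D\subseteq M$ gives $D\setminus\{e\}\subseteq M$ automatically, \lref{itsdef} says that $D\setminus\{e\}$ is \emph{not} a defining set precisely when some cycle $T\subseteq M$ satisfies $(D\setminus\{e\})\cap T=\emptyset$. Because $D$ is a defining set we already know $|D\cap T|\geq 1$ for this $T$; combined with $(D\setminus\{e\})\cap T=\emptyset$ this forces $D\cap T=\{e\}$. Conversely, any cycle $T$ with $D\cap T=\{e\}$ clearly has $(D\setminus\{e\})\cap T=\emptyset$, so it witnesses the failure of $D\setminus\{e\}$ to be defining. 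Thus, for each fixed $e$, the statement ``$D\setminus\{e\}$ is not a defining set'' is equivalent to ``there exists a cycle $T\subseteq M$ with $T\cap D=\{e\}$.''

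Quantifying over all $e\in D$ and invoking the minimality reduction then yields the stated equivalence: $D$ is critical if and only if $D$ is a defining set and, for every $(i,j,D_{ij})\in D$, there is a cycle $T\subseteq M$ with $T\cap D=\{(i,j,D_{ij})\}$. I do not expect a genuine obstacle here; the argument is essentially bookkeeping layered on top of \lref{itsdef}. The only point that needs a little care is the monotonicity-based reduction to single-entry deletions, since without it the word ``minimal'' would a priori demand checking all proper subsets of $D$ rather than only the $|D|$ immediate ones.
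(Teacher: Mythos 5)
Your argument is correct and is exactly the reasoning the paper has in mind: the paper states this corollary without proof, treating it as immediate from Lemma~\ref{itsdef} and the definition of a critical set as a minimal defining set. Your write-up simply makes explicit the monotonicity reduction to single-entry deletions and the resulting equivalence, so it matches the paper's (implicit) approach.
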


Next we give a new classification of critical sets in $(0,1)$-matrices that will be useful for our purposes. 
We embed any element $M$ of $\Lambda_{2m}^m$ in the Euclidian plane with the top left-hand corner in the origin. Specifically, 
 cell $(i,j)$ of $M$ becomes the unit square bordered by 
 the lines $x=j-1$, $x=j$, $y=-i$ and $y=1-i$. 
A {\em South East walk} in $M$ is then the walk induced by a sequence of points $w_0=(0,0)$, $w_1, w_2, \dots , w_{4m}=(2m,-2m)$, 
where either $w_{i+1}=w_{i}-(0,1)$ (a step {\em South}) or $w_{i+1}=w_i+(1,0)$ (a step {\em East}), for each $i$, $0\leq i<4m$.
Observe that $W$ begins at the top-left corner of $M$ and finishes at the bottom-right corner of $M$.

\begin{example}
Figure $\ref{tomato}$ gives an example of a South-East walk in a matrix from $\Lambda_{3}^6$. 
With respect to this walk, 
$$\begin{array}{c}
 (w_0,w_1,\dots ,w_{12})  = ((0,0),(1,0),(1,-1),(2,-1),(2,-2),(3,-2),(3,-3), \\
(4,-3),(4,-4),(5,-4),(5,-5),(5,-6),(6,-6)), \\
(P_0,P_1,\dots ,P_{11})=((0,0),(1,0),(1,-1),(2,-1),(2,-2),(3,-2),(3,-3), \\
(4,-3),(4,-4),(5,-4)(5,-6),(6,-6)),
\end{array}$$ 
$L=5$ and $L'=6$. By Theorem $\ref{handier}$, the elements in italics form a critical set in this matrix of size $14$. 
\begin{figure}
$$\begin{array}{|c!{\vrule width1.6pt}c
!{\vrule width1.6pt}c!{\vrule width1.6pt}c!{\vrule width1.6pt}c!{\vrule width1.6pt}c!{\vrule width2.4pt}}
\Xcline{1-1}{2.4pt}\Xcline{2-6}{0.8pt}
1 & \mcol{0} & \mcol{\it 1} & \mcol{\it 1} &\mcol{0} & \mcol{0}  \\
\Xcline{1-1}{0.8pt}\Xcline{2-2}{2.4pt}\Xcline{3-6}{0.8pt}
\mcol{1} & 1 & \mcol{0} & \mcol{0} &\mcol{0} & \mcol{\it 1}    \\
\Xcline{1-2}{0.8pt}\Xcline{3-3}{2.4pt}\Xcline{4-6}{0.8pt}
\mcol{\it 0} & \mcol{\it 0} & 1 & \mcol{0} &\mcol{\it 1} & \mcol{\it 1}  \\
\Xcline{1-3}{0.8pt}\Xcline{4-4}{2.4pt}\Xcline{5-6}{0.8pt}
\mcol{\it 0} & \mcol{1} & \mcol{\it 0} & 1 &\mcol{0} & \mcol{\it 1}  \\
\Xcline{1-4}{0.8pt}\Xcline{5-5}{2.4pt}\Xcline{6-6}{0.8pt}
\mcol{1} & \mcol{\it 0} & \mcol{1} & \mcol{\it 0} &1 & \mcol{0}   \\
\Xcline{1-6}{0.8pt}
\mcol{\it 0} & \mcol{1} & \mcol{\it 0} & \mcol{1} & 1 & \mcol{0}   \\
\Xcline{1-5}{0.8pt}\Xcline{6-6}{2.4pt}
\end{array}$$
\caption{A critical set of size $14$ in an element of $\Lambda_{6}^{3}$}
\label{tomato}
\end{figure}
\label{exm1}
\end{example}

The following theorem was shown in \cite{CR}.
\begin{theorem}
The set $D$ is a defining set of a $(0,1)$-matrix $M$ if and only if $D\subset M$ and the rows and columns 
 can be rearranged so that there exists a South-East Walk $W$  
 with only $1$'s below $W$ and only $0$'s above $W$, within the cells of $M\setminus D$.
\label{handy}
\end{theorem}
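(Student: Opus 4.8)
The plan is to translate both the definition of a defining set and the geometric ``separating walk'' condition into one and the same system of linear inequalities, and then read off the result from the standard feasibility criterion for difference constraints. Throughout I would lean on \lref{itsdef}: $D\subseteq M$ is a defining set for $M$ if and only if no cycle $T\subseteq M$ is disjoint from $D$, i.e.\ if and only if $M\setminus D$ contains no cycle (in the sense of \tref{sii}).

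For the easy direction, suppose that after rearranging rows and columns there is a South-East walk $W$ with every free $1$ (a $1$ of $M\setminus D$) below $W$ and every free $0$ above $W$. Since $W$ is monotone, below-$W$ is a lower-left staircase region, so there is a non-decreasing threshold $b(i)$ with a free cell $(i,j)$ equal to $1$ exactly when $j\leq b(i)$ and to $0$ exactly when $j>b(i)$. I would then assume for contradiction that $M\setminus D$ contained a cycle $T$, traverse $T$ as an alternating closed path $1,0,1,0,\dots$, and note that monotonicity of $b$ forces each step from a $1$ to a $0$ within a row to strictly increase the column index, and each step from a $0$ to a $1$ within a column to strictly increase the row index. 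Hence both coordinates are non-decreasing around $T$ and strictly increase on at least one step, which is impossible for a closed path; so no such cycle exists and $D$ is defining by \lref{itsdef}.

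For the hard direction I would encode a separating walk by potentials: assign reals $\rho(i)$ to rows and $\gamma(j)$ to columns and demand $\rho(i)>\gamma(j)$ at every free $1$ and $\rho(i)<\gamma(j)$ at every free $0$. Ordering rows by increasing $\rho$ downward and columns by increasing $\gamma$ rightward makes $\{\rho>\gamma\}$ a lower-left order ideal whose boundary is a genuine South-East walk separating the free $1$'s (below) from the free $0$'s (above), after a generic perturbation breaking ties. This is a system of strict difference constraints, so by the standard feasibility criterion (no contradictory directed cycle) it is solvable precisely when the constraint digraph on the vertex set rows$\,\cup\,$columns is acyclic.

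The main step, and where I expect the real work, is to identify directed cycles of this digraph with cycles in $M\setminus D$. A simple directed cycle alternates between rows and columns and yields a contradictory chain $\rho(r_1)>\gamma(c_1)>\rho(r_2)>\gamma(c_2)>\cdots>\rho(r_1)$; reading off the free cells responsible for each inequality produces a free $1$ at each $(r_\ell,c_\ell)$ and a free $0$ at each $(r_{\ell+1},c_\ell)$, a set with exactly one $1$ and one $0$ in each of its rows and columns, i.e.\ a cycle contained in $M\setminus D$ (its disjoint mate obtained by swapping these entries); conversely any cycle in $M\setminus D$ gives such a contradictory chain. Combining this equivalence with \lref{itsdef}, the system is feasible exactly when $M\setminus D$ has no cycle, i.e.\ exactly when $D$ is defining, and feasibility delivers the required walk. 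The remaining care is geometric bookkeeping---choosing the potentials distinct and confirming the induced orders really render $\{\rho=\gamma\}$ a South-East walk with the free $1$'s strictly below and free $0$'s strictly above---since this is the one place where degeneracies could otherwise intrude.
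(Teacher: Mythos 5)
The paper itself offers no proof of Theorem~\ref{handy} --- it is imported from \cite{CR} --- so there is no in-paper argument to measure your write-up against; judged on its own terms, your proof is correct. The easy direction is the standard monotonicity argument: the staircase threshold $b(i)$ forces both coordinates to increase strictly around any alternating cycle of $M\setminus D$, which is impossible for a closed circuit, and via Lemma~\ref{itsdef} and Theorem~\ref{sii} this is exactly what is needed. The hard direction via strict difference constraints ($\rho(i)>\gamma(j)$ at free $1$'s, $\rho(i)<\gamma(j)$ at free $0$'s) is a genuinely different and rather slick packaging: the crux, which you correctly isolate, is that a simple directed cycle of the bipartite constraint digraph reads off an alternating set of free cells with exactly one $1$ and one $0$ in each occupied row and column, i.e.\ a cycle in the sense of Theorem~\ref{sii}, and conversely; hence acyclicity of the digraph is equivalent, through Lemma~\ref{itsdef}, to $D$ being a defining set, and a topological order then supplies the potentials. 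Two small points deserve to be written out in a full version: (i) a directed $2$-cycle $r\to c\to r$ would force the cell $(r,c)$ to be simultaneously a free $1$ and a free $0$, so every simple directed cycle has length at least $4$ and the extracted cell set really is a nondegenerate cycle; (ii) since there are finitely many strict constraints, a small perturbation makes all potentials distinct without violating any of them, after which sorting rows by $\rho$ and columns by $\gamma$ makes $\{(i,j)\mid \rho(i)>\gamma(j)\}$ a staircase region whose boundary is a genuine South-East walk from the top-left to the bottom-right corner, with the free $1$'s strictly below and the free $0$'s strictly above. What your route buys is generality and economy: it applies verbatim to arbitrary row and column sum vectors and replaces the explicit row/column reordering one would otherwise construct by hand with the off-the-shelf feasibility criterion for difference constraints.
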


Next, we work towards a refinement of the previous theorem to give an equivalent definition of a critical set in a $(0,1)$-matrix. 
To this end, let $C$ be a critical set of a $(0,1)$-matrix $M$. From Theorem \ref{handy}, we may assume that the rows and columns of $M\setminus C$ are rearranged so that there exists a South-East walk $W$ with only $1$'s below $W$ and only $0$'s above $W$.
We rearrange the rows and columns of $M$ identically. 
We assume, without loss of generality, that $W$ includes the point $(1,0)$ (otherwise, swap $0$'s with $1$'s and take the transpose of $M$ to obtain an equivalent walk). 

We wish to define the {\em corners} of the walk $W$. These are the points where the walks changes direction from South to East (or vice versa).  
To this end, there is a uniquely-defined list of points $P_i=(r_i,c_i)$, $i\geq 0$, each on $W$ and with integer 
coordinates, such that $P_0=(0,0)$ and $P_{\ell}=(m,-m)$ and:
\begin{itemize}
\item If $k$ is odd, $r_k=r_{k-1}$ and $c_k>c_{k-1}$;
\item if $k$ is even, $c_k=c_{k-1}$ and $r_k>r_{k-1}$. 
\end{itemize} 

If the last step is South, then $P_{2\ell}=(r_{\ell},c_{\ell})$ for some $\ell$. In this case we define $L=L'=\ell$. Otherwise 
the last step is East and $P_{2\ell-1}=(r_{\ell-1},c_{\ell})$ for some $\ell$; here we define $L=\ell-1$ and $L'=\ell$. 
 For each $1\leq i\leq L$ and $1\leq j\leq L'$,  
let $L_{i,j}$ be the {\em block} of cells defined as follows:
$$L_{i,j}:=\{(r,c)\mid r_i<r \leq r_{i+1}, c_j < c\leq c_{j+1}\}.$$
The blocks defined above are simply the partition of $M$ induced by the corners of the walk. 

\begin{lemma}
If $C$ is a critical set, every cell in blocks of the form $L_{i,i}$ ($i\leq L$) contains $1$ and every cell 
in blocks of the form $L_{i,i+1}$ ($i\leq L'-1$) contains $0$. 
\end{lemma}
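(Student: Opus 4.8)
The plan is to read off the entries of $M\setminus C$ from the walk and then use minimality to forbid ``exceptional'' entries of $C$ inside the two families of blocks. Since $W$ has only $1$'s below it and only $0$'s above it within $M\setminus C$, and since every cell of $L_{i,i}$ lies below $W$ while every cell of $L_{i,i+1}$ lies above $W$ (the diagonal block $L_{i,i}$ is bounded on the north and east by $W$, the super-diagonal block $L_{i,i+1}$ on the south and west), the conclusion can only fail if some entry of the critical set $C$ sits in one of these blocks with the ``wrong'' value: a $0$ of $C$ inside some $L_{i,i}$, or a $1$ of $C$ inside some $L_{i,i+1}$. So I would fix such an offending entry and contradict the minimality of $C$ (Corollary~\ref{itscri}) by showing that the entry is in fact redundant.

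Treat first a cell $(a,b)\in L_{i,i}$ with $M_{ab}=0$. My main device is Theorem~\ref{handy}: to show that $C\setminus\{(a,b,0)\}$ is still a defining set, it suffices to exhibit, after rearranging rows and columns, a South-East walk separating the $1$'s and $0$'s of $(M\setminus C)\cup\{(a,b,0)\}$. First I would move $(a,b)$ to the north-east corner $(r_i+1,c_{i+1})$ of its block by permuting the rows indexed by $r_i<r\le r_{i+1}$ among themselves and the columns indexed by $c_i<c\le c_{i+1}$ among themselves. The observation that makes this legal is that all of these rows cross $W$ at the same column $c_{i+1}$ and all of these columns cross $W$ at the same row $r_i$; hence each such transposition sends below-$W$ cells to below-$W$ cells and above-$W$ cells to above-$W$ cells, so it carries the walk $W$ (and the critical set) to an equivalent configuration while bringing the offending $0$ to the corner. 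With $(a,b)$ at $(r_i+1,c_{i+1})$, I would then ``notch'' $W$ there, replacing the two steps around that corner by the detour: East to $c_{i+1}-1$, South one unit, East to $c_{i+1}$, South one unit. This is again a monotone South-East walk, and the \emph{only} cell it moves from below to above is $(r_i+1,c_{i+1})$; since that cell now carries value $0$ and lies above the new walk, Theorem~\ref{handy} certifies that $(a,b)$ was removable, the desired contradiction.

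The case of a cell $(a,b)\in L_{i,i+1}$ with $M_{ab}=1$ is entirely symmetric: I would permute within the block to bring the offending $1$ to the south-west corner $(r_{i+1},c_{i+1}+1)$ of $L_{i,i+1}$ (the corner adjacent to $W$), and then notch $W$ outward so that exactly this cell is moved from above to below, again producing a defining set missing it. The routine parts are the arithmetic of the corner coordinates and the verification that the detours are genuine South-East walks. The step I expect to need the most care is the permutation argument, namely checking that rearranging rows and columns \emph{within a single block} preserves validity of the walk for $M\setminus C$; this is exactly what reduces the general, interior position of the offending cell to the corner case, where a single-cell notch is available. A direct notch at an interior cell would instead expose an entire sub-rectangle of the block and fail, so reducing to the corner by intra-block permutations is the crux of the argument.
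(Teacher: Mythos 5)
Your proof is correct and follows essentially the same route as the paper's: swap the offending cell to a corner of its block via row and column transpositions within the block, observe that such intra-block swaps preserve the walk property of $W$ with respect to $M\setminus C$, and then perturb $W$ so that exactly that one cell changes sides, yielding a proper subset of $C$ that is still a defining set by Theorem~\ref{handy}, contradicting minimality. If anything you are slightly more careful than the paper on one point: you correctly identify the north-east corner of $L_{i,i}$ (the cell at the convex corner of $W$) as the unique cell a South-East walk can flip individually, whereas the paper's stated target $(r_i+1,c_i+1)$ reads as the north-west corner of the block.
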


\begin{proof}
Suppose, for the sake of contradiction, that cell $(r,c)$ contains $0$ and belongs to block $L_{i,i}$ for some $i$.
Swap row $r$ with $r_i+1$ and column $c$ with column $c_i+1$. 
Observe that these swaps do not change any of the properties of $W$ with respect to $C$ - in particular $C$ is still a defining set 
as in Theorem \ref{handy}. 

Next, modify the walk $W$ to obtain the unique South-East walk $W'$ such cell $(r,c)$ is {\em above} $W'$ but every cell not equal to $(r,c)$ is below $W'$ if and only if it is below $W$.
Then $W'$ is still a South-East walk and thus implies the existence of a defining set $D'$ as in Theorem \ref{handy}. 
But $D'\subset C$, so $C$ is not a minimal defining set, a contradiction. 

The case when a block of the form $L_{i,i+1}$ contains $0$ is similar. 
\end{proof}

We have the following. 
\begin{theorem}
A set $C$ is a critical set of a $(0,1)$-matrix $M$ if and only if $C\subset M$ and the rows and columns 
of $M$ (and $C$) can be rearranged so that there exists a South-East Walk $W$ such that:
\begin{itemize}
\item The walk $W$ begins in the top-left corner of $M$ and finishes in the bottom-right corner of $M$; 
\item Within $M\setminus C$, there are only $1$'s below $W$ and only $0$'s above $W$; 
\item Within $M$, each cell in a block bordering $W$ from below contains $1$;
\item Within $M$, each cell in a block bordering $W$ from above contains $0$. 
\end{itemize}
\label{handier}
\end{theorem}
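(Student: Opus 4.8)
The plan is to treat the two directions separately, with the forward direction essentially packaging results already in hand and the converse carrying the real content. For the forward direction, suppose $C$ is a critical set; since a critical set is in particular a defining set, Theorem \ref{handy} lets me rearrange the rows and columns of $M$ (and $C$) so that a South-East walk $W$ exists having only $1$'s below $W$ and only $0$'s above $W$ within $M\setminus C$, which gives the first two bullet points. The only geometric observation I would spell out is that the blocks bordering $W$ from below are exactly the diagonal blocks $L_{i,i}$ and those bordering $W$ from above are exactly the superdiagonal blocks $L_{i,i+1}$; granting this, the last two bullet points are precisely the conclusion of the Lemma preceding this theorem, so no further work is needed in this direction.

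For the converse, assume the four conditions hold. The first two bullet points are exactly the hypothesis of Theorem \ref{handy}, so $C$ is a defining set, and the second bullet point identifies $C$ with the set of entries equal to $0$ lying below $W$ together with the entries equal to $1$ lying above $W$ (call these the \emph{anomalies}). By Corollary \ref{itscri} it then suffices to produce, for each $(i,j,C_{ij})\in C$, a cycle $T\subseteq M$ with $T\cap C=\{(i,j,C_{ij})\}$.

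The heart of the argument is this cycle construction, which I would carry out using the purity of the blocks adjacent to $W$. Consider first a $0$ lying below $W$, say $(i,j)$ in block $L_{a,b}$ with $b<a$; fix its own row $\rho_a$ (in row-group $a$) and column $\gamma_b$ (in column-group $b$), and for each $k$ with $b\le k\le a$ choose a representative row $\rho_k$ in row-group $k$ and column $\gamma_k$ in column-group $k$, with $\rho_a,\gamma_b$ as above. I would then take $T$ to consist of $(\rho_a,\gamma_b)=(i,j)$ together with the diagonal cells $(\rho_k,\gamma_k)\in L_{k,k}$ for $b\le k\le a$ and the superdiagonal cells $(\rho_k,\gamma_{k+1})\in L_{k,k+1}$ for $b\le k\le a-1$. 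Each occupied row and column of $T$ then contains exactly one $1$ (from a diagonal block) and one $0$ (from a superdiagonal block or the anomaly), so every occupied row and column has exactly two nonempty positions; hence $T$ is a cycle, its disjoint mate being obtained by interchanging $0$'s and $1$'s. By the third and fourth conditions every diagonal cell is a $1$ below $W$ and every superdiagonal cell is a $0$ above $W$, so none of these is an anomaly, whence $T\cap C=\{(i,j)\}$. A $1$ lying above $W$ is handled by the mirror-image staircase, alternating diagonal $1$'s with superdiagonal $0$'s across the block-rows between $a$ and $b$.

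The main obstacle, and the step I would treat most carefully, is the verification that this construction is always available. I must check that the diagonal blocks $L_{k,k}$ and superdiagonal blocks $L_{k,k+1}$ for the relevant range of $k$ are nonempty so that the representative rows and columns exist (this holds because each such block borders $W$, forcing $a\le L$), that the resulting $T$ is a single cycle rather than a union (and, should it decompose, that one may pass to its component containing the anomaly via Theorem \ref{sii}), and above all that every non-anomaly cell of $T$ avoids $C$. This last point is exactly where conditions three and four are indispensable: without the purity of the bordering blocks a diagonal cell could be a $0$ or a superdiagonal cell a $1$, and the cycle would meet $C$ in a second entry, destroying minimality. Conversely, this is what guarantees there are no redundant entries: once $C$ is the anomaly set, every one of its entries is witnessed by such a cycle, so $C$ is minimal and hence critical, completing the equivalence.
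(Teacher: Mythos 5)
Your proposal is correct and follows essentially the same route as the paper: the forward direction is dispatched by Theorem \ref{handy} together with the lemma on the purity of the blocks bordering $W$, and the converse reduces via Corollary \ref{itscri} to exhibiting, for each anomaly, exactly the same alternating staircase cycle through the diagonal blocks $L_{k,k}$ (all $1$'s) and superdiagonal blocks $L_{k,k+1}$ (all $0$'s) that the paper writes down. Your extra care about nonemptiness of the intermediate blocks and about the trade possibly splitting into components only makes explicit what the paper leaves implicit.
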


\begin{proof}
From the discussion above it remains to show the final claim that these conditions are sufficient.  From Theorem \ref{handy}, such a subset $C$ is a defining set of $M$, so from Corollary \ref{itscri} and Theorem \ref{sii} it is sufficient to show that for each element of $C$ there exists a  cycle which intersects $C$ only at that element.
Let $(r,c,0)\in C$. Then $(r,c,0)\in L_{a,b}$ for some block such that $a<b$. 
Then there exists a cycle:
$$\{(r,c,0),(r_a,c,1),(r_a,c_{a+1},0),(r_{a+1},c_{a+1},1),\dots ,(r_{b-1},c_b,0),(r,c_b,1)\},$$
  where $(r_i,c_i)$ is any cell in block $L_{i,i}$ and $(r_i,c_{i+1})$ is any cell in block $L_{i,i+1}$, for each $i$ where these cells are in the cycle.  
The case when $(r,c,1)\in C$ is similar. 
\end{proof}

Theorems \ref{handy} and \ref{handier} imply the following result, which was first proved in \cite{Cav}. 

\begin{theorem}
The complement of a minimal defining set in a $(0,1)$-matrix is a defining set.
\label{compli}
\end{theorem}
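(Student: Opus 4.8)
The plan is to read off, from the two walk characterisations, that a critical set sits inside the grid in a way that is the exact mirror image of its complement, and then to exploit a $180^{\circ}$ rotation of the grid to convert the defining-set picture for $C$ into one for $M\setminus C$.

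First I would fix a critical set $C$ of $M$ and invoke Theorem~\ref{handier}: after a suitable rearrangement of rows and columns there is a South-East walk $W$ such that, inside $M\setminus C$, every cell below $W$ is a $1$ and every cell above $W$ is a $0$, while every block bordering $W$ is monochromatic ($1$ below, $0$ above). The structural fact I need next is that the entries of $C$ occupy the \emph{opposite} sides of $W$: every $1$ of $C$ lies strictly above $W$ and every $0$ of $C$ lies strictly below $W$. This is precisely the block-location information established in the proof of Theorem~\ref{handier} — the $0$'s of $C$ fall in off-diagonal blocks on one side of the staircase and the $1$'s in off-diagonal blocks on the other — and it is illustrated in Figure~\ref{tomato}, where the italicised $0$'s all sit below the walk and the italicised $1$'s all sit above it.

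The crux is then a symmetry argument. Reversing the order of the rows together with the order of the columns (a $180^{\circ}$ rotation of the embedded grid) is a legitimate rearrangement of the kind allowed by Theorem~\ref{handy}; it carries $W$ to another South-East walk $W^{\ast}$ running from the top-left to the bottom-right corner, and — because the rotation sends the top-right corner to the bottom-left corner — it interchanges the region above a walk with the region below it while leaving every $0/1$ entry unchanged in value. Consequently, after this rotation the $1$'s of $C$ all lie below $W^{\ast}$ and the $0$'s of $C$ all lie above $W^{\ast}$.

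Finally I would apply Theorem~\ref{handy} to the set $D=M\setminus C$. Since $M\setminus D=C$, the previous paragraph says exactly that, within $M\setminus D$, there are only $1$'s below $W^{\ast}$ and only $0$'s above $W^{\ast}$; hence $D=M\setminus C$ is a defining set of $M$, as required. The main obstacle is the second step: one must justify carefully that all of $C$ (not merely the cells of $M\setminus C$) is forced onto the side of $W$ opposite to the matching entries of the complement, and check that the rotation genuinely both preserves the class of South-East walks and swaps ``above'' with ``below''. Once those two points are nailed down, the conclusion follows immediately by feeding $W^{\ast}$ into Theorem~\ref{handy}.
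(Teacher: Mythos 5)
Your argument is correct and is essentially the route the paper intends (the paper states the result as an immediate consequence of Theorems~\ref{handy} and~\ref{handier} without writing out the details): place the entries of $C$ on the sides of $W$ opposite to those prescribed for $M\setminus C$, then flip the walk by a symmetry and apply Theorem~\ref{handy} to $M\setminus C$. The one step you flag as needing care follows in one line from minimality together with Theorem~\ref{handy} alone, rather than from the block lemma: if a $1$ of $C$ lay below $W$ (or a $0$ of $C$ lay above $W$), deleting that entry from $C$ would leave the walk condition for $M\setminus C$ intact, so $C$ would not be a minimal defining set.
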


In fact, we can improve on this a little. 

\begin{theorem}
Let $C$ be a critical set of a matrix $M$ in $\Lambda_{2m}^m$. 
Then there is a defining set $D$ for $M$ such that $D\subset M\setminus C$ and $|D|\leq 4m^2-2m-|C|$. 
\end{theorem}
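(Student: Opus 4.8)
The plan is to start from Theorem~\ref{compli}, which already tells us that $E:=M\setminus C$ is a defining set for $M$, of size $|E|=4m^2-|C|$; the entire task is then to trim $E$ down by at least $2m$ cells without destroying the defining property. First I would record, using the walk $W$ supplied by Theorem~\ref{handier} for $C$, the exact splitting of $M$ into the ``exceptional'' cells $C=\{0\text{'s below }W\}\cup\{1\text{'s above }W\}$ and the ``consistent'' cells $E=\{1\text{'s below }W\}\cup\{0\text{'s above }W\}$. In particular, no cycle of $M$ can lie wholly inside $C$, since otherwise $E$ would miss that cycle and fail to be defining (contradicting Theorem~\ref{compli} via Lemma~\ref{itsdef}); so every cycle of $M$ meets $E$.

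To produce the smaller set, I would apply Theorem~\ref{handy} to the defining set $E$: after a suitable rearrangement of rows and columns there is a South-East walk $W'$ for which the cells of $C=M\setminus E$ are consistent, that is, only $1$'s lie below $W'$ and only $0$'s lie above $W'$ among the cells of $C$. I then take $D$ to be the exception set of $W'$, namely $D=\{0\text{'s below }W'\}\cup\{1\text{'s above }W'\}$. By Theorem~\ref{handy} this $D$ is again a defining set, and the consistency of $C$ with $W'$ forces $D\cap C=\emptyset$: a $0$-cell of $C$ lies above $W'$ (so is not a $0$ below $W'$) and a $1$-cell of $C$ lies below $W'$ (so is not a $1$ above $W'$). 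Hence $D\subseteq M\setminus C=E$, as required. Since $|M|=4m^2$ and $C,D$ are disjoint, the desired inequality $|D|\le 4m^2-2m-|C|$ is exactly equivalent to $|C|+|D|\le 4m^2-2m$, i.e.\ to the assertion that at least $2m$ cells of $M$ lie in neither $C$ nor $D$.

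Establishing this surplus of $2m$ uncovered cells is the heart of the argument, and I expect it to be the main obstacle, for the following reason: the walks $W$ and $W'$ are drawn in \emph{different} orderings of the rows and columns, so I cannot simply superimpose the two staircases and read off the region between them (indeed, one checks that in a single fixed ordering the separation demanded of $W'$ is in general impossible). The route I would take is to count cell-by-cell: a $0$-cell avoids both $C$ and $D$ exactly when it is consistent with both walks, and likewise for a $1$-cell, so the uncovered cells are precisely those on which the two staircases agree. I would try to pin these down using the forced uniform bordering blocks from the Lemma immediately preceding Theorem~\ref{handier} (each $L_{i,i}$ is all $1$'s and each $L_{i,i+1}$ is all $0$'s), together with the fact that $W$ and $W'$ both run from the top-left to the bottom-right corner, charging one agreeing cell to each of the $2m$ South-steps of the walk to obtain the required $2m$. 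As a fallback, should this two-staircase bookkeeping prove unwieldy, I would instead build cycles of $M$ directly from the corner cells of $W$ (as in the explicit cycle construction inside the proof of Theorem~\ref{handier}) and argue, via $C\cap D=\emptyset$ and Lemma~\ref{itsdef}, that suitably chosen cycles cannot be covered entirely by $C\cup D$; producing $2m$ such cycles that are pairwise disjoint on an uncovered cell would then close the gap, the delicate point there being to select cycles long enough (or arranged so) that they resist full coverage by the two disjoint defining sets.
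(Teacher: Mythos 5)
Your overall strategy---produce a second South-East walk $W'$ whose exception set $D$ is disjoint from $C$, and then show that at least $2m$ cells of $M$ escape both---has the right shape, and you have correctly located the difficulty; but the difficulty is not resolved, and the way you obtain $W'$ makes it essentially unresolvable. You get $W'$ abstractly, by applying Theorem~\ref{handy} to the defining set $M\setminus C$ (i.e.\ via Theorem~\ref{compli}); this produces a walk in some unrelated row and column ordering, with no control over how its exception set sits relative to $W$. Disjointness of $C$ and $D$ is then all you have, which gives only $|C|+|D|\leq 4m^2$ and hence $|C|\leq 3m^2$---too weak to recover Corollary~\ref{larger}. Your first route to the extra $2m$ (charging an ``agreeing'' cell to each South step) founders exactly on the point you yourself flag, that the two staircases live in different orderings, and your fallback via cycles is not carried out.

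The missing idea is to build $W'$ explicitly in the \emph{same} coordinate system as $W$, from the block decomposition of Theorem~\ref{handier}: let $W'$ be the unique South-East walk that places the blocks $L_{i,j}$ with $i>j$ below it and those with $i\leq j$ above it, so that $W'$ differs from $W$ precisely by flipping the diagonal blocks $L_{i,i}$ from below to above. One then takes $D$ to be the exception set of $W'$ with the roles of $0$ and $1$ swapped, namely the $1$'s below $W'$ together with the $0$'s above $W'$; the swap matters, since with your orientation the diagonal blocks---which consist entirely of $1$'s by the lemma immediately preceding Theorem~\ref{handier} and now lie above $W'$---would all land in $D$ and then collide with the $0$'s of $C$ in the blocks $L_{i,j}$ with $i>j$. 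With the swapped orientation every off-diagonal block is partitioned between $C$ and $D$, while the diagonal blocks contribute to neither, so $|C|+|D|=4m^2-\sum_i|L_{i,i}|$. Since every row meets some diagonal block, $\sum_i|L_{i,i}|\geq 2m$, which is exactly the surplus your argument is missing.
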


\begin{proof}
Let $C$ be a critical set of a matrix $M$ in $\Lambda_{2m}^m$. 
Then the rows and columns of $M$ can be arranged so that there exists a South-East walk satisfying the conditions of  Theorem \ref{handier}. 
Let $\{L_{i,j}\mid 1\leq i\leq L, 1\leq i\leq L'\}$ be the set of blocks with respect to the walk $W$.
Next, let $W'$ be the unique South-East walk such that:
\begin{itemize} 
\item If $i>j$, block $L_{i,j}$ is below $W'$; and 
\item if $i\leq j$, block $L_{i,j}$ is above $W'$. 
\end{itemize}
Since $W'$ is a South-East walk, by Theorem \ref{handy}, the set of cells below $W'$ containing $1$ and the set of cells above $W'$ containing $0$ form a defining set $D$ of $M$. 
(Note that we are actually applying an equivalent version of Theorem \ref{handy} with $1$'s and $0$'s swapped.) 
Moreover, by construction $D$ avoids all cells of $C$ and all cells from the main diagonal of blocks (since by Theorem \ref{handier}, such blocks contain no $0$'s). Indeed,   
$|D|=4m^2-|C|-\sum_{i=1}^L |L_{i,i}|.$ 
But each row contains at least one element in the main diagonal of blocks, so 
$|D|\leq 4m^2-2m-|C|$. 
\end{proof}

\begin{corollary}
If $C$ is a critical set in a matrix $M$ in $\Lambda_{2m}^m$, then 
$|C|\leq 3m^2-2m$. 
\label{larger}
\end{corollary}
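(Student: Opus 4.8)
The plan is to derive the bound directly from the immediately preceding theorem together with the known value of $\mbox{\rm scs}(\Lambda_{2m}^m)$. The preceding theorem supplies, for any critical set $C$ of a matrix $M\in\Lambda_{2m}^m$, a defining set $D$ for $M$ with $D\subset M\setminus C$ and $|D|\leq 4m^2-2m-|C|$. This does the structural work; the corollary should then follow by bounding $|D|$ from below and rearranging.

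First I would observe that every defining set of $M$ contains a critical set as a subset: any minimal subset of $D$ that remains a defining set is, by definition, critical. Consequently $|D|\geq \mbox{\rm scs}(M)$. By Theorem \ref{smally} we have $\mbox{\rm scs}(\Lambda_{2m}^m)=m^2$, and since this quantity is defined as the minimum of $\mbox{\rm scs}(M)$ over all $M\in\Lambda_{2m}^m$, it serves as a per-matrix lower bound: $\mbox{\rm scs}(M)\geq m^2$ for each individual $M$. Hence $|D|\geq m^2$.

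Chaining the two inequalities gives $m^2\leq |D|\leq 4m^2-2m-|C|$, and rearranging immediately yields $|C|\leq 3m^2-2m$, which is the claimed bound. Since the deduction is a single substitution once the preceding theorem is available, I do not expect any genuine obstacle. The only point requiring care is the justification that $|D|\geq m^2$: one must note explicitly that $\mbox{\rm scs}(\Lambda_{2m}^m)=m^2$ bounds $\mbox{\rm scs}(M)$ from below for every matrix (not merely for the extremal one), and that the size of any defining set dominates the size of a smallest critical set contained within it.
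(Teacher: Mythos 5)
Your proof is correct and is essentially the paper's argument: the paper derives the same inequality chain by contradiction (assuming $|C|>3m^2-2m$ forces a defining set, hence a critical set, of size less than $m^2$, contradicting Theorem \ref{smally}), whereas you run it directly. The extra care you take in justifying $|D|\geq m^2$ via $\mbox{\rm scs}(M)\geq\mbox{\rm scs}(\Lambda_{2m}^m)=m^2$ is sound and matches the paper's implicit reasoning.
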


\begin{proof}
For the sake of contradiction, suppose there exists a critical set $C$ in a matrix $M$ in $\Lambda_{2m}^m$ and that $|C|>3m^2-2m$. 
Then, by the previous theorem, there exists a defining set $D$ in $M$ (and thus a minimal defining set, i.e. a critical set) of size less than $m^2$. 
 This contradicts Theorem \ref{smally}.
\end{proof}

\section{The spectrum of critical set sizes}

In this section we show that for each $m\geq 1$ and each $k$ such that $m^2\leq k\leq 3m^2-4m+2$, there exists a critical set of size $k$ in some matrix from $\Lambda_{2m}^m$. 
From Corollary \ref{larger}, lcs($\Lambda_{2m}^m$)$\leq 3m^2-2m$, so this result completes the spectrum with less than $2m$ possible exceptions. We conjecture that there are no exceptions and lcs$(\Lambda_{2m}^m)=3m^2-4m+2$.  

We first show that lcs$(\Lambda_{2m}^m)\geq 3m^2-4m+2$ by showing the existence of a critical set of such size for each $m$.  
By observation, such a critical set exists for $m=1$. Otherwise, let $m\geq 2$. 
We define a $(0,1)$-matrix $X_{2m}$ as follows. 
Cell $(i,j)$ contains:
\begin{itemize} 
\item $0$ if $i-j\equiv k$ (mod $2n$) where $k\in \{1,2,\dots ,m-1,2m-1\}$;
\item  $1$ otherwise. 
\end{itemize}
It is clear that $X_{2m}\in \Lambda_{2m}^m$. 
Now, let $W$ be the unique South-East walk from the top left-hand corner of $X_{2m}$ to the bottom right-hand corner of $X_{2m}$ which 
borders the main diagonal from above. 

\begin{figure}
$$
\begin{array}{|c!{\vrule width1.6pt}c
!{\vrule width1.6pt}c!{\vrule width1.6pt}c!{\vrule width1.6pt}c!{\vrule width1.6pt}c!{\vrule width1.6pt}c!
{\vrule width1.6pt}c!{\vrule width1.6pt}}
\Xcline{1-1}{1.6pt}\Xcline{2-8}{0.8pt}
1 & \multicolumn{1}{c|}{0} &\multicolumn{1}{|c|}{\it 1} & \multicolumn{1}{|c|}{\it 1} & \multicolumn{1}{|c|}{\it 1} & \multicolumn{1}{|c|}{0} & \multicolumn{1}{|c|}{0} & \multicolumn{1}{|c|}{0} \\
\Xcline{1-1}{0.8pt}\Xcline{2-2}{1.6pt}\Xcline{3-8}{0.8pt}
\multicolumn{1}{|c|}{\it 0} &1 & \multicolumn{1}{c|}{0} & \multicolumn{1}{|c|}{\it 1} & 
\multicolumn{1}{|c|}{\it 1} & \multicolumn{1}{|c|}{\it 1} & 
\multicolumn{1}{|c|}{0} & \multicolumn{1}{|c|}{0}  \\
\Xcline{1-2}{0.8pt}\Xcline{3-3}{1.6pt}\Xcline{4-8}{0.8pt}
\multicolumn{1}{|c|}{\it 0} &
\multicolumn{1}{|c|}{\it 0} &
1 & \multicolumn{1}{c|}{0} & \multicolumn{1}{|c|}{\it 1} & 
\multicolumn{1}{|c|}{\it 1} & \multicolumn{1}{|c|}{\it 1} & 
\multicolumn{1}{|c|}{0}   \\
\Xcline{1-3}{0.8pt}\Xcline{4-4}{1.6pt}\Xcline{5-8}{0.8pt}
\multicolumn{1}{|c|}{\it 0} &
\multicolumn{1}{|c|}{\it 0} &
\multicolumn{1}{|c|}{\it 0} & 1 & \multicolumn{1}{c|}{0} & \multicolumn{1}{|c|}{\it 1} & 
\multicolumn{1}{|c|}{\it 1} & \multicolumn{1}{|c|}{\it 1}   \\
\Xcline{1-4}{0.8pt}\Xcline{5-5}{1.6pt}\Xcline{6-8}{0.8pt}
\multicolumn{1}{|c|}{1} &
\multicolumn{1}{|c|}{\it 0} &
\multicolumn{1}{|c|}{\it 0} &
\multicolumn{1}{|c|}{\it 0} & 1 & \multicolumn{1}{c|}{0} & \multicolumn{1}{|c|}{\it 1} & 
\multicolumn{1}{|c|}{\it 1}    \\
\Xcline{1-5}{0.8pt}\Xcline{6-6}{1.6pt}\Xcline{7-8}{0.8pt}
\multicolumn{1}{|c|}{1} &
\multicolumn{1}{|c|}{1} &
\multicolumn{1}{|c|}{\it 0} &
\multicolumn{1}{|c|}{\it 0} &
\multicolumn{1}{|c|}{\it 0} & 1 & \multicolumn{1}{c|}{0} & \multicolumn{1}{|c|}{\it 1}   \\
\Xcline{1-6}{0.8pt}\Xcline{7-7}{1.6pt}\Xcline{8-8}{0.8pt}
\multicolumn{1}{|c|}{1} &
\multicolumn{1}{|c|}{1} &
\multicolumn{1}{|c|}{1} &
\multicolumn{1}{|c|}{\it 0} &
\multicolumn{1}{|c|}{\it 0} &
\multicolumn{1}{|c|}{\it 0} & 1 & \multicolumn{1}{c|}{0}  \\
\Xcline{1-7}{0.8pt}\Xcline{8-8}{1.6pt}
\multicolumn{1}{|c|}{\it 0} &
\multicolumn{1}{|c|}{1} &
\multicolumn{1}{|c|}{1} &
\multicolumn{1}{|c|}{1} &
\multicolumn{1}{|c|}{\it 0} &
\multicolumn{1}{|c|}{\it 0} &
\multicolumn{1}{|c|}{\it 0} & 1  \\
\hline
\multicolumn{8}{c}{X_8} 
\end{array}$$
\caption{The matrix $X_8$ with elements of a critical set shown in italics.} 
\label{ookii}
\end{figure}

By Theorem \ref{handier}, this defines a critical set $C$ which consists of each $0$ below $W$ and each $1$ above $W$. 
 Therefore we have the following. 
\begin{theorem}
Let $m\geq 1$. There exists a critical set in $X_{2m}\in \Lambda_{2m}^m$ of size $3m^2-4m+2$. 
\label{maxy}
\end{theorem}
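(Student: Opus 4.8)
The plan is to accept the critical set $C$ already singled out in the discussion preceding the statement — the $0$'s lying below the staircase walk $W$ together with the $1$'s lying above $W$ — and to compute $|C|$ by sorting its cells according to the diagonals of $X_{2m}$. Since Theorem~\ref{handier} has already been invoked to guarantee that $C$ is a critical set, the substance of the proof is purely a count; the only preliminary is to make the geometry of $W$ fully explicit.

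First I would record which cells lie on each side of $W$. As $W$ borders the main diagonal from above, its corners occur at every integer point, so each block bordering $W$ from below consists of a single diagonal cell $(i,i)$ and each block bordering $W$ from above consists of a single cell $(i,i+1)$ immediately above the diagonal; consequently a cell $(i,j)$ lies below $W$ exactly when $i \geq j$ and above $W$ exactly when $i < j$. At this stage it is worth confirming the hypotheses of Theorem~\ref{handier} directly: the cell $(i,i)$ has $i - i \equiv 0$ and hence holds a $1$, while $(i,i+1)$ has $i - (i+1) \equiv 2m-1 \pmod{2m}$ and hence holds a $0$, and each admissible residue $k \in \{1,\dots,m-1,2m-1\}$ contributes exactly one $0$ to every row and column, so $X_{2m} \in \Lambda_{2m}^m$. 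With this in place, $C$ is indeed a critical set and we may count.

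The count I would organise by the diagonal index $d = i - j$, noting that diagonal $d$ contains $2m - |d|$ cells and that $X_{ij} = 0$ iff $d \bmod 2m \in \{1,\dots,m-1,2m-1\}$. Below $W$ one has $d \in \{0,\dots,2m-1\}$, so the relevant $0$'s lie on the diagonals $d \in \{1,\dots,m-1\}$, contributing $\sum_{d=1}^{m-1}(2m-d) = \tfrac{3m(m-1)}{2}$, together with the lone cell on $d = 2m-1$. Above $W$ one has $d \in \{-(2m-1),\dots,-1\}$; translating the modular condition shows the $0$'s there occupy $d = -1$ and $d \in \{-(2m-1),\dots,-(m+1)\}$, so the surviving $1$'s occupy precisely the $m-1$ diagonals $d \in \{-m,\dots,-2\}$, contributing $\sum_{k=2}^{m}(2m-k) = \tfrac{(m-1)(3m-2)}{2}$. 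Adding the three pieces gives $|C| = \tfrac{3m(m-1)}{2} + 1 + \tfrac{(m-1)(3m-2)}{2} = (m-1)(3m-1) + 1 = 3m^2 - 4m + 2$.

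The argument carries no conceptual difficulty; the one place where care is genuinely needed is the two modular translations, since the single residue $2m-1$ in the definition of the $0$'s plays two quite different roles: below $W$ it supplies only the corner cell $(2m,1)$ on diagonal $2m-1$, whereas above $W$ it is instead the value $d=-1$ that is forced to be $0$. Keeping these separate, and not double-counting the corner diagonal, is the only subtlety, after which the computation reduces to two arithmetic-series evaluations. Finally I would dispose of $m=1$ directly — there $X_2$ is the identity and a single entry is critical — although the same count in fact returns $3-4+2 = 1$ in that case as well.
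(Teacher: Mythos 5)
Your proposal is correct and follows the same route as the paper: take the walk $W$ bordering the main diagonal of $X_{2m}$ from above, check the hypotheses of Theorem~\ref{handier} (diagonal cells hold $1$, cells just above hold $0$), and count the $0$'s below and $1$'s above $W$. The paper leaves the diagonal-by-diagonal count implicit, so your explicit evaluation of the two arithmetic series (and the careful handling of the residue $2m-1$) is a welcome filling-in rather than a departure.
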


See Figure \ref{ookii} for a demonstration of the previous theorem when $m=4$.  
From Corollary \ref{larger} we have: 

\begin{corollary}
$3m^2-4m+2\leq lcs(\Lambda_{2m}^m)\leq 3m^2-2m$. 
\end{corollary}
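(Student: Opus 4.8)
The plan is to read off both inequalities directly from the two results that immediately precede the corollary, since the statement is purely a matter of unwinding the definition $\mathrm{lcs}(\Lambda_{2m}^m)=\max\{\mathrm{lcs}(M)\mid M\in\Lambda_{2m}^m\}$, where $\mathrm{lcs}(M)$ denotes the size of the largest critical set in $M$. To bound this quantity from below I need only exhibit one matrix carrying a large critical set; to bound it from above I need a bound on critical sets that holds uniformly across all matrices in $\Lambda_{2m}^m$.

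For the lower bound, I would invoke Theorem \ref{maxy}, which supplies the explicit matrix $X_{2m}\in\Lambda_{2m}^m$ together with a critical set of size exactly $3m^2-4m+2$. Since $\mathrm{lcs}(X_{2m})$ is at least the size of this particular critical set, and since the maximum defining $\mathrm{lcs}(\Lambda_{2m}^m)$ ranges over all members of $\Lambda_{2m}^m$ (in particular over $X_{2m}$), we obtain $\mathrm{lcs}(\Lambda_{2m}^m)\ge\mathrm{lcs}(X_{2m})\ge 3m^2-4m+2$. For the upper bound, I would appeal to Corollary \ref{larger}: every critical set $C$ in every $M\in\Lambda_{2m}^m$ satisfies $|C|\le 3m^2-2m$. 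Specializing to the largest critical set of a fixed $M$ gives $\mathrm{lcs}(M)\le 3m^2-2m$, and since this holds for all $M$ it holds for the maximum as well, so $\mathrm{lcs}(\Lambda_{2m}^m)\le 3m^2-2m$. Concatenating the two estimates yields the claimed chain.

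Since both halves amount to bookkeeping with the definitions of $\mathrm{lcs}(M)$ and $\mathrm{lcs}(\Lambda_{2m}^m)$, I do not expect any genuine obstacle at this step: all of the real work is already discharged in Theorem \ref{maxy}, where the matrix $X_{2m}$ is constructed and the cells above and below the diagonal-bordering walk $W$ are verified to form a critical set through Theorem \ref{handier}, and in Corollary \ref{larger}, whose proof rests on the complementation argument together with the lower bound $\mathrm{scs}(\Lambda_{2m}^m)=m^2$ of Theorem \ref{smally}. The only point demanding any care is the elementary one of distinguishing ``there exists a critical set of size $3m^2-4m+2$'' from ``the largest critical set has size at least $3m^2-4m+2$'', but these coincide immediately once $\mathrm{lcs}(M)$ is read as a maximum over critical sets of $M$.
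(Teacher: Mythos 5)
Your proposal is correct and matches the paper's (implicit) argument exactly: the lower bound is read off from Theorem \ref{maxy} via the critical set of size $3m^2-4m+2$ in $X_{2m}$, and the upper bound is Corollary \ref{larger} applied uniformly over $\Lambda_{2m}^m$. The paper treats this as immediate from those two results, just as you do.
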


Next, we fill the lower part of the spectrum. 

\begin{lemma}
Let $m\geq 1$. For each $k$,  $m^2\leq k\leq m^2+(m-1)^2$, there exists a critical set of size $k$ in some matrix from $\Lambda_{2m}^m$. 
\label{lower}
\end{lemma}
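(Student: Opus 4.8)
The plan is to prove the lemma by explicit construction: for each target size $k$ I will exhibit a matrix $M\in\Lambda_{2m}^m$ together with a South-East walk $W$ satisfying the four conditions of Theorem \ref{handier}, so that the induced critical set $C$ (the $0$'s lying strictly below $W$ together with the $1$'s lying strictly above $W$) has $|C|=k$. The advantage of working through Theorem \ref{handier} is that criticality is automatic once the bordering-block conditions hold; in particular, if every diagonal block $L_{i,i}$ and every block $L_{i,i+1}$ is a single cell, these conditions reduce to ``the main diagonal is all $1$'s and the superdiagonal is all $0$'s'', which is trivial to arrange. Throughout I will use the bookkeeping identity $|C|=|B|-2m^2+2t$, where $B$ is the set of cells below $W$ and $t$ is the number of $1$'s above $W$.

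The backbone of the construction is a family of circulant matrices read off the main-diagonal walk. Fix a shift set $S\subseteq\mathbb{Z}_{2m}$ with $0\in S$, $2m-1\notin S$ and $|S|=m$, and let $M$ have $M_{ij}=1$ exactly when $(i-j)\bmod 2m\in S$; then $M\in\Lambda_{2m}^m$ and, taking $W$ to be the main diagonal, the resulting $C$ is critical. A short count gives
$$|C|=2m^2-m-2\sum_{d\in S'}(m-d),\qquad S'=S\setminus\{0\}\subseteq\{1,\dots,2m-2\},\ |S'|=m-1.$$
The numbers $m-d$ for $d=1,\dots,2m-2$ are exactly the integers $-(m-2),\dots,m-1$, each occurring once, so an $(m-1)$-element subset realises every integer value of $\sum_{d\in S'}(m-d)$ between $-\binom{m-1}{2}$ and $\binom{m}{2}$ (a standard contiguity argument for fixed-size subset sums of consecutive integers). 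Hence this family produces a critical set of every size in $[m^2,\,3m^2-4m+2]$ having the same parity as $m$, and in particular covers all sizes of that parity in our range $[m^2,\,m^2+(m-1)^2]$.

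The remaining sizes are those of the opposite parity, and here lies the genuinely new idea, a \emph{parity flip}: since $|C|\equiv|B|\pmod 2$ and the main-diagonal walk fixes $|B|=2m^2+m\equiv m$, one must use a walk that is not the main diagonal. I will replace one single-cell diagonal block by a single $2\times2$ diagonal block (as in the size-$5$ critical set in $\Lambda_4^2$ given by the rows $1001,0110,0110,1001$, whose walk has block profile $(1,2,1)$). Merging two adjacent diagonal cells pulls exactly one cell (the superdiagonal cell between them) from above $W$ to below $W$, so $|B|$ increases by $1$ and the parity of $|C|$ flips, while the configuration still satisfies Theorem \ref{handier}. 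Sliding this $2\times2$ block along the diagonal and adjusting the surrounding circulant band then sweeps out the opposite-parity sizes.

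I expect the hardest part to be twofold. First, one must verify that each proposed matrix genuinely lies in $\Lambda_{2m}^m$: the small examples already show that the forced $1$'s and $0$'s of the bordering blocks interact rigidly with the row- and column-sum constraints, so the band surrounding the $2\times2$ block must be chosen carefully to keep all sums equal to $m$. Second, one must check that the parity-flipped family attains \emph{every} integer of the required parity throughout $[m^2,\,m^2+(m-1)^2]$, rather than a sparse subset; as in the circulant case I expect this to follow from contiguity of the achievable band parameters, but confirming that contiguity together with the two boundary sizes is where the bookkeeping will concentrate.
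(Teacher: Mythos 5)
Your circulant construction is sound as far as it goes: with $0\in S$, $2m-1\notin S$ and the main-diagonal walk, Theorem \ref{handier} does apply (the diagonal blocks are single cells containing $1$, the superdiagonal blocks single cells containing $0$), and your count $|C|=m+2\sum_{d\in S'}d$ together with contiguity of fixed-size subset sums of $\{1,\dots,2m-2\}$ rigorously produces every size of parity $m$ in $[m^2,\,3m^2-4m+2]$. The genuine gap is the entire other parity class, which is half of the lemma. Your ``parity flip'' is only a plan: you correctly observe that $|C|\equiv|B|\pmod 2$ forces a non-diagonal walk, but you never exhibit the modified matrices, never verify they lie in $\Lambda_{2m}^m$ (merging cells $(i,i)$ and $(i+1,i+1)$ into a $2\times 2$ diagonal block forces cell $(i,i+1)$ to become $1$, which breaks the circulant row/column sums and requires a compensating change elsewhere that you do not specify), and never verify that the resulting sizes sweep out every integer of the opposite parity in $[m^2,\,m^2+(m-1)^2]$ rather than a subset. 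You flag these three verifications yourself as ``where the bookkeeping will concentrate,'' which is an accurate self-assessment: they are the proof, and they are missing. Note also that your flip changes $|B|$ by $1$ but the block merge simultaneously changes which cells are forced and hence changes $t$; controlling the net effect on $|C|$ to be exactly $\pm 1$ uniformly along the diagonal is not obvious and needs an argument.

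For contrast, the paper avoids the parity issue entirely: it writes $k-m^2=\alpha(m-1)+\beta$ and builds a single family $M(k)$ from a staircase walk through one quadrant $Q$ (reflected into the other quadrants to fix the row and column sums), so that the critical set consists of the $m^2$ zeros in the first $m$ columns plus exactly $\alpha(m-1)+\beta$ further zeros below $Q$. Every integer $k$ in the range is hit directly by one construction, with no case split on parity. Your circulant family is closer in spirit to the paper's proof of Lemma \ref{upper} (where a base matrix $X_{2m}$ gives one parity via trades and a perturbed matrix $Y_{2m}$ gives the other); if you want to salvage your approach, the missing piece is an explicit analogue of that $Y_{2m}$-style perturbation, fully checked against the row/column sums and against the block conditions of Theorem \ref{handier}, together with a contiguity argument for the perturbed family.
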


\begin{proof}
We define a matrix $M(k)\in \Lambda_{2m}^m$ as follows. 
Let $k-m^2=\alpha(m-1)+\beta$, where $\alpha\geq 0$ and $0\leq \beta<m-1$. 
Let $W$ be the unique South-East walk including the points:
 $$\begin{array}{l} 
(0,0),(m,0), (m,\alpha-m), (2m-1,\alpha-m), (2m-1,-m),\\
 (2m,-m),(2m,-2m) \mbox{\ (if $\beta=0$);}
\end{array}$$
 otherwise $\beta>1$ and 
 let $W$ be the unique South-East walk including the points:
  $$\begin{array}{l}
(0,0),(m,0), (m,\alpha+1-m),(m+\beta,\alpha+1-m),(m+\beta,\alpha-m),\\
(2m-1,\alpha-m), (2m-1,-m), (2m,-m),(2m,-2m).
\end{array}$$

\begin{figure}
$$\begin{tabular}{c c c c c c c c}
& $\leftarrow$ & $\beta$ & $\rightarrow$ &
\multicolumn{2}{c}{} & \hz  \\
\Xcline{2-7}{0.8pt}
\multicolumn{1}{c}{} &\vmcol{2pt}{0} & \vmcol{0pt}{0} & \vmcol{0pt}{0} & \vmcol{0pt}{\hz} & \vmcol{0pt}{\hz} & \vmcol{0pt}{\hz}\\ 
\Xcline{2-7}{0.8pt}
\multicolumn{1}{c}{} &\vmcol{2pt}{0} & \vmcol{0pt}{0} & \vmcol{0pt}{0} & \vmcol{0pt}{\hz} & \vmcol{0pt}{\hz} & \vmcol{0pt}{\hz}\\ 
\Xcline{2-4}{2.4pt}\Xcline{4-7}{0.8pt}
\multicolumn{1}{c}{} &\vmcol{0.8pt}{1} & \vmcol{0pt}{1} & \vmcol{0pt}{1} & \vmcol{2.0pt}{0} & \vmcol{0pt}{0} & \vmcol{0pt}{\hz} \\
\Xcline{2-4}{0.8pt}\Xcline{5-6}{2.4pt}\Xcline{7-7}{0.8pt} 
\multicolumn{1}{c}{$\uparrow$} &\vmcol{0.8pt}{\hz} & \vmcol{0pt}{\hz} & 
 \vmcol{0pt}{\hz}  &  \vmcol{0pt}{1} &  \vmcol{0pt}{1} & 
\vmcol{2.0pt}{0} \\
\Xcline{2-7}{0.8pt}
\multicolumn{1}{c}{$\alpha$} &\vmcol{0.8pt}{\hz} & \vmcol{0pt}{\hz} & 
 \vmcol{0pt}{\hz}  &  \vmcol{0pt}{1} &  \vmcol{0pt}{1} & 
\vmcol{2.0pt}{0} \\
\Xcline{2-7}{0.8pt}
\multicolumn{1}{c}{$\downarrow$} &\vmcol{0.8pt}{\hz} & \vmcol{0pt}{\hz} & 
 \vmcol{0pt}{\hz}  &  \vmcol{0pt}{1} &  \vmcol{0pt}{1} & 
\vmcol{2.0pt}{0} \\
\Xcline{2-6}{0.8pt} \Xcline{7-7}{2.4pt}
\end{tabular}
$$
\label{fig1}
\caption{The walk $W$ within the quadrant $Q$.}
\end{figure}

Let $Q$ be the quadrant of cells in $M(k)$ bordered by points $(m,0)$, $(m,-m)$, $(2m,-m)$ and $(2m,0)$, 
Place $1$ in each cell of $Q$ below $W$ and $0$ in each cell of $Q$ above $W$. 
Next, for each cell $(i,j)\in Q$ containing entry $e$, let cell $(i-m,j-m)$ contain entry $e$, cell $(i-m,j)$ contain entry $1-e$ and 
cell $(i,j-m)$ contain entry $1-e$. We illustrate the walk $W$ within $Q$ (and its induced blocks) in Figure \ref{fig1}.  A complete example of $M(k)$ is given in Figure \ref{tryagain}. 

Clearly $M(k)$ thus defined is an element of $\Lambda_{2m}^{m}$. Observe also that the walk $W$ defines a critical set $C$ as in Theorem \ref{handier}. Such a critical set consists of all occurrences $0$ below the walk $W$. Thus every $0$ in the first $m$ columns belongs to $C$ (a total of $m^2$), no $0$ from $Q$ occurs in $C$ and each of the $\alpha(m-1)+\beta$ $0$'s in the quadrant below $Q$ belongs to $C$. Thus $C$  has size $k$, as required.   
\end{proof}

\begin{figure}
$$
\begin{array}{|c!{\vrule width1.6pt}c
!{\vrule width1.6pt}c!{\vrule width1.6pt}c!{\vrule width1.6pt}c!{\vrule width1.6pt}c!{\vrule width1.6pt}c!
{\vrule width1.6pt}c!{\vrule width2.4pt}}
\Xcline{1-4}{2.4pt}\Xcline{5-8}{0.8pt}
\mcol{1} & \mcol{1} & \mcol{1} &\mcol{1} & \vmcol{2.4pt}{0} &\mcol{0} &\mcol{0} & \mcol{0}  \\
\Xcline{1-8}{0.8pt}
\mcol{1} & \mcol{1} & \mcol{1} &\mcol{1} & \vmcol{2.4pt}{0} &\mcol{0} &\mcol{0} & \mcol{0}  \\
\Xcline{1-4}{0.8pt}\Xcline{5-5}{2.4pt}\Xcline{6-8}{0.8pt}
\mcol{\it 0} & \mcol{1} & \mcol{1} & \mcol{1} & \mcol{1} & \vmcol{2.4pt}{0} &\mcol{0} & \mcol{0}   \\
\Xcline{1-5}{0.8pt}\Xcline{6-7}{2.4pt}\Xcline{8-8}{0.8pt}
\mcol{\it 0} & \mcol{\it 0} & \mcol{\it 0} & 
\mcol{1} & \mcol{1} & \mcol{1} & \mcol{1} & \vmcol{2.4pt}{0}   \\
\Xcline{1-7}{0.8pt}\Xcline{8-8}{2.4pt}
\mcol{\it 0} & \mcol{\it 0} & \mcol{\it 0} & \mcol{\it 0} &  
\mcol{1} & \mcol{1} & \mcol{1} & 1    \\
\Xcline{1-8}{0.8pt}
\mcol{\it 0} & \mcol{\it 0} & \mcol{\it 0} & \mcol{\it 0} &  
\mcol{1} & \mcol{1} & \mcol{1} & 1    \\
\Xcline{1-8}{0.8pt}
\mcol{1} & \mcol{\it 0} & \mcol{\it 0} & \mcol{\it 0} & \mcol{\it 0} &  
\mcol{1} & \mcol{1} & 1    \\
\Xcline{1-8}{0.8pt}
\mcol{1} & \mcol{1} & \mcol{1} & \mcol{\it 0} & \mcol{\it 0} & \mcol{\it 0} & \mcol{\it 0} &  
1    \\
\Xcline{1-8}{0.8pt}
\end{array}
$$
\caption{$M(4)$ with $m=20$, $\alpha=\beta=1$; the elements of the critical set are shown in italics.} 
\label{tryagain}
\end{figure}

\begin{lemma}
Let $m\geq 4$. 
For each $k$,  $2m^2-5m+15\leq k\leq 3m^2-4m+2$, there exists a critical set of size $k$ in some matrix from $\Lambda_{2m}^m$. 
\label{upper}
\end{lemma}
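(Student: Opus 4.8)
The plan is to reduce everything to Theorem~\ref{handier}, so that producing a critical set of size $k$ means exhibiting a matrix $M\in\Lambda_{2m}^m$ together with a South--East walk $W$ meeting the four block conditions there. The quantitative engine is a single double count. If $a$ is the number of $0$'s strictly below $W$ and $N$ is the number of cells lying above $W$, then, since $M$ has exactly $2m^2$ zeros, the number of $1$'s above $W$ equals $N-(2m^2-a)$, so the associated critical set $C=\{\text{$0$'s below }W\}\cup\{\text{$1$'s above }W\}$ has size
\[
|C| = 2a + N - 2m^2 .
\]
Hence the parity of $|C|$ equals the parity of $N$, the area above the walk. This is the organising observation: for the main-diagonal staircase $W_0$ (the walk behind $X_{2m}$) we have $N=\binom{2m}{2}=2m^2-m$, so \emph{every} critical set arising from $W_0$ has size $\equiv m\pmod 2$. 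Realising the other parity therefore forces a change of walk, and the proof splits accordingly.

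For $k\equiv m\pmod 2$ I would keep $W_0$, so $|C|=2a-m$ and I only need to realise $a=(k+m)/2$ below-diagonal zeros in a matrix whose main diagonal is all $1$'s and whose superdiagonal is all $0$'s. A per-column count shows the column sums alone force $a\le(3m^2-3m+2)/2$, equivalently $|C|\le 3m^2-4m+2$, and that $X_{2m}$ meets this bound column by column; this recovers Theorem~\ref{maxy} and fixes the top of the range. Intermediate sizes come from an explicit circulant family: placing the zeros on the diagonals indexed by a set $Z\subset\{1,\dots,2m-1\}$ with $|Z|=m$ and $2m-1\in Z$ gives a valid $W_0$-critical set of size $4m^2-m-2\sum_{d\in Z}d$, and as $Z$ varies the sum $\sum_{d\in Z}d$ runs through every integer between its extremes, delivering all even-parity sizes down to $m^2$.

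For $k\not\equiv m\pmod 2$ I would replace $W_0$ by the walk $W_1$ that follows the staircase except at the top-left corner, where it borders the single $2\times 2$ all-ones block $\{(1,1),(1,2),(2,1),(2,2)\}$. Exactly one cell, $(1,2)$, moves from above to below the walk, so $N=2m^2-m-1$ and $|C|=2a-m-1$, now of the opposite parity. The block conditions freeze the corner block to $1$ and the cells $(1,3),(2,3)$ (with the shifted superdiagonal) to $0$; away from these frozen cells I would again tune the number of below-walk zeros using the circulant pattern modified only near the corner. A per-column count completely parallel to the one for $W_0$ again yields $a\le(3m^2-3m+2)/2$, so $W_1$ attains $|C|=3m^2-4m+1$, the odd value immediately below the even maximum, and the two families interleave to cover every integer in the stated interval.

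The main obstacle is the bookkeeping at the two ends. At the top I must check that the column-wise maximum for $W_1$ is \emph{simultaneously} attainable with uniform row sums, i.e.\ exhibit a corner-modified analogue of $X_{2m}$; at the bottom I must check that the value $a=m^2-2m+8$ corresponding to $k=2m^2-5m+15$ still lies in the admissible interval for $W_1$. This is exactly where $m\ge 4$ enters, since it is precisely the condition that the interval be non-empty, $2m^2-5m+15\le 3m^2-4m+2$, i.e.\ $m^2+m-13\ge 0$. The genuinely delicate step is confirming, for both walks, that every intermediate value of $a$ is realisable without disturbing the frozen diagonal, superdiagonal and corner entries: I would secure this either through the explicit circulant families above or, more robustly, by a sequence of $2\times 2$ interchanges each changing the number of below-walk zeros by one, verifying that such an interchange can always be found among the free cells. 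The remaining verifications are routine counting. I note that this approach in fact covers a somewhat wider range than stated, with $2m^2-5m+15$ being comfortably within reach rather than a true bottleneck of the method.
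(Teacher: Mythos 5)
Your organising identity $|C|=2a+N-2m^2$ and the resulting parity obstruction are correct, and your circulant family is a genuinely different, and cleaner, device than the paper's for one of the two parities: choosing which $m$ of the diagonals $d\in\{1,\dots,2m-1\}$ (necessarily including $2m-1$, the superdiagonal, and excluding the main diagonal) carry the zeros does produce, via Theorem~\ref{handier} applied to the staircase walk, a critical set of size $4m^2-m-2\sum_{d\in Z}d$, and the subset sums sweep out a full interval, so every $k\equiv m\pmod 2$ with $m^2\le k\le 3m^2-4m+2$ is realised. That half of the lemma I would accept as proved, and it is stronger than what the paper gets from its own mechanism for that parity: the paper instead fixes the single matrix $X_{2m}$ and switches $\alpha$ members of an explicit family of $m(m+1)/2-7$ pairwise disjoint $2\times 2$ trades to their mates, each switch lowering the critical set size by $2$.

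The gap is the entire opposite-parity half. Your parity switch via a $2\times 2$ corner block in the walk is sound in principle (it parallels the paper's matrix $Y_{2m}$, which flips a $2\times 2$ pattern at the bottom-right and perturbs the walk there to reach size $3m^2-4m+1$), but you never actually produce the matrices: you defer to ``the circulant pattern modified only near the corner'' or to a sequence of interchanges for which ``such an interchange can always be found among the free cells'', and that verification is precisely the technical content of the proof. Note that no pure circulant can satisfy your $W_1$ block conditions, since $(1,2)$ must contain $1$ and $(2,3)$ must contain $0$ yet they lie on the same diagonal; so the corner modification genuinely disturbs the diagonal bookkeeping, and the connectivity claim --- that one can always step $a'$ down by one without touching the frozen corner, diagonal and superdiagonal cells --- needs an explicit witness. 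The paper supplies one: the set $I$ together with the case analysis showing that the four cells of each trade $T(i,j)$ carry the right entries, lie on the correct sides of the walk, avoid the frozen cells, and are pairwise disjoint, and the same trades are reused verbatim in $Y_{2m}$ to reach $3m^2-4m+1-2\alpha$. Until you exhibit an analogous explicit family for your corner-modified walk reaching down to $a'=m^2-2m+8$ (and check it at $m=4$, where such families are smallest), only the sizes $k\equiv m\pmod 2$ are established.
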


\begin{proof}
Let $I$ be the following set of cells in $X_{2m}$:
$$I:=\{(i,j)\mid m<i\leq 2m, 1\leq j<m-2, m\leq i-j<2m-1\}.$$
Observe that $|I|=m(m+1)/2-7$. 
For each cell $(i,j)\in I$, define a trade 
$T(i,j)\subset X_{2m}$
on the cells: 
$$\{(i,j),(i-m+1,j),(i,i-j),(i-m+1,i-j)\}.$$

In Figure \ref{fig3}, the elements of $I$ are shown in bold for the case $m=5$. 
 \begin{figure}
$$\begin{array}
{|c!{\vrule width1.6pt}c
!{\vrule width1.6pt}c!{\vrule width1.6pt}c!{\vrule width1.6pt}c!{\vrule width1.6pt}c!{\vrule width1.6pt}c!
{\vrule width1.6pt}c!{\vrule width1.6pt}c!{\vrule width 1.6pt}c!{\vrule width 1.6pt}}
\Xcline{1-1}{2.4pt}\Xcline{2-10}{0.8pt}
1 & \mcol{0} & \mcol{\it 1} & \mcol{\it 1} &\mcol{\it 1} &\mcol{\it 1} & \mcol{0} &\mcol{0} & \mcol{0} & \mcol{0} \\
\Xcline{1-1}{0.8pt}\Xcline{2-2}{2.4pt}\Xcline{3-10}{0.8pt}
\mcol{\it 0_A} & 1 & \mcol{0} & \mcol{\it 1} &\mcol{\it 1_A} &\mcol{\it 1} & \mcol{\it 1} &\mcol{0} & \mcol{0} & \mcol{0} \\
\Xcline{1-2}{0.8pt}\Xcline{3-3}{2.4pt}\Xcline{4-10}{0.8pt}
\mcol{\it 0_B} & \mcol{\it 0_E} & 1 & \mcol{0} & \mcol{\it 1_E} &\mcol{\it 1_B} &\mcol{\it 1} & \mcol{\it 1} &\mcol{0} & \mcol{0} \\
\Xcline{1-3}{0.8pt}\Xcline{4-4}{2.4pt}\Xcline{5-10}{0.8pt}
\mcol{\it 0_C} & \mcol{\it 0_F} & \mcol{\it 0} & 1 & \mcol{0} & \mcol{\it 1_F} &\mcol{\it 1_C} &\mcol{\it 1} & \mcol{\it 1} &\mcol{0} \\
\Xcline{1-4}{0.8pt}\Xcline{5-5}{2.4pt}\Xcline{6-10}{0.8pt}
\mcol{\it 0_D} & \mcol{\it 0_G} & \mcol{\it 0} & \mcol{\it 0} & 1 & \mcol{0} & \mcol{\it 1_G} &\mcol{\it 1_D} &\mcol{\it 1} & \mcol{\it 1} \\
\Xcline{1-5}{0.8pt}\Xcline{6-6}{2.4pt}\Xcline{7-10}{0.8pt}
\mcol{\bf 1_A} & \mcol{\it 0_H} & \mcol{\it 0} & \mcol{\it 0} & \mcol{\it 0_A} & 1 & \mcol{0} & \mcol{\it 1_H} &\mcol{\it 1} &\mcol{\it 1}  \\
\Xcline{1-6}{0.8pt}\Xcline{7-7}{2.4pt}\Xcline{8-10}{0.8pt}
\mcol{\bf 1_B} & \mcol{\bf 1_E} & \mcol{\it 0} & \mcol{\it 0} & \mcol{\it 0_E} & \mcol{\it 0_B} & 1 & \mcol{0} & \mcol{\it 1} &\mcol{\it 1}  \\
\Xcline{1-7}{0.8pt}\Xcline{8-8}{2.4pt}\Xcline{9-10}{0.8pt}
\mcol{\bf 1_C} & \mcol{\bf 1_F} & \mcol{1} & \mcol{\it 0} & \mcol{\it 0} & \mcol{\it 0_F} & \mcol{\it 0_C} & 1 & \mcol{0} & \mcol{\it 1}  \\
\Xcline{1-8}{0.8pt}\Xcline{9-9}{2.4pt}\Xcline{10-10}{0.8pt}
\mcol{\bf 1_D} & \mcol{\bf 1_G} & \mcol{1} & \mcol{1} & \mcol{\it 0} & \mcol{\it 0} & \mcol{\it 0_G} & \mcol{\it 0_D} & 1 & \mcol{0}   \\
\Xcline{1-9}{0.8pt} \Xcline{10-10}{2.4pt}
\mcol{\it 0} & \mcol{\bf 1_H} & \mcol{1} & \mcol{1} & \mcol{1} & \mcol{\it 0} & \mcol{\it 0} & \mcol{\it 0_H} & \mcol{\it 0} & 1 \\
\Xcline{1-10}{0.8pt}
\end{array}$$
\caption{The elements of $I$ in the case $m=5$; trades are shown by common subscripts.} 
\label{fig3}
\end{figure}

We claim that the set $\{T(i,j)\mid (i,j)\in I\}$ forms a set of $|I|$ disjoint trades and that only one element of $T(i,j)$ (containing $1$) lies above the walk $W$ for each $(i,j)\in I$.  
It suffices to show disjointness and that for each $(i,j)\in I$:

\begin{itemize}
\item Cells $(i,j)$ and $(i-(m-1),i-j)$ contain 1;
\item Cells $(i,i-j)$ and $(i-(m-1),j)$ contain 0;
\item Cell $(i-(m-1),j)$ lies above $W$ and each other cell lies below.
\end{itemize}

Since $m \leqslant i-j < 2m - 1$, by the definition of $X_{2m}$, cell $(i,j)$ contains $1$ whenever $(i,j)\in I$. We also note that $i-j > 0$ for each $(i,j)\in I$, so each such cell $(i,j)$ lies below $W$.

Next consider the cell $(i-(m-1),i-j)$ where $(i,j)\in I$. Then 
$1\leq j\leq m-3$ implies that 
$$ -m \leqslant (i-(m-1))-(i-j)\leqslant -4.$$
Thus from the definition of $X_m$, cell $(i-(m-1),i-j)$ contains $1$.
Since $2\leq i-(m-1)\leq m$, such cells lie above the main diagonal (and thus above $W$).  

Next, since $m\leqslant i-j\leq 2m-2$, we have:
$$-m+1\leq j-(i-(m-1))\leq -1,$$ so each cell of the form $(i-(m-1),j)$ contains $0$. 
 Since $1\leq j\leq m-3$, such cells also lie below the main diagonal. 

Finally, we check that cells of the form $(i,i-j)$ always contain 0. This follows from the fact that 
$1\leq j\leq m-3$. Since $m+1\leq i\leq 2m$, such cells also lie below the main diagonal. 

We now check the disjointness of the trades. 
As cells of the form $(i-(m-1),i-j)$ lie above $W$ and cells of the form $(i,j)$ lie below $W$, there is no intersection in cells containing 0. (It is straightforward to check that cells of the same form are distinct.) 
Finally since
$j\leq m-1< i-j$, cells of the from 
$(i,i-j)$ and $(i-(m-1),j)$ are distinct. Again, it is straightforward to check the cells of the same such forms are distinct. 

Thus, replacing exactly $\alpha$ of these trades by their disjoint mates creates a critical set of size $3m^2-4m+2-2\alpha$. 
Since $|I|=m(m+1)/2-7$, this yields critical sets of size 
$3m^2-4m+2-2\alpha$ whenever $0\leq \alpha\leq m(m+1)/2-7$.

Define $Y_{2m}$  to be the element of $\Lambda_{2m}^m$ formed by swapping $0$ and $1$ in the cells
$(m-1,2m-1)$, $(m-1,2m)$, $(2m,2m-1)$ and $(2m,2m)$. 
The matrix $Y_{10}$ is given in Figure \ref{fig4}. Observe that $Y_{2m}$ has a critical set of size 
$3m^2-4m+1$ by making a small adjustment to our South East walk so that cell $(2m,2m)$ lies above the South East walk, with all other cells as before. 

\begin{figure}
$$\begin{array}
{|c!{\vrule width1.6pt}c
!{\vrule width1.6pt}c!{\vrule width1.6pt}c!{\vrule width1.6pt}c!{\vrule width1.6pt}c!{\vrule width1.6pt}c!
{\vrule width1.6pt}c!{\vrule width1.6pt}c!{\vrule width 1.6pt}c!{\vrule width 1.6pt}}
\Xcline{1-1}{2.4pt}\Xcline{2-10}{0.8pt}
1 & \mcol{0} & \mcol{\it 1} & \mcol{\it 1} &\mcol{\it 1} &\mcol{\it 1} & \mcol{0} &\mcol{0} & \mcol{0} & \mcol{0} \\
\Xcline{1-1}{0.8pt}\Xcline{2-2}{2.4pt}\Xcline{3-10}{0.8pt}
\mcol{\it 0} & 1 & \mcol{0} & \mcol{\it 1} &\mcol{\it 1} &\mcol{\it 1} & \mcol{\it 1} &\mcol{0} & \mcol{0} & \mcol{0} \\
\Xcline{1-2}{0.8pt}\Xcline{3-3}{2.4pt}\Xcline{4-10}{0.8pt}
\mcol{\it 0} & \mcol{\it 0} & 1 & \mcol{0} & \mcol{\it 1} &\mcol{\it 1} &\mcol{\it 1} & \mcol{\it 1} &\mcol{0} & \mcol{0} \\
\Xcline{1-3}{0.8pt}\Xcline{4-4}{2.4pt}\Xcline{5-10}{0.8pt}
\mcol{\it 0} & \mcol{\it 0} & \mcol{\it 0} & 1 & \mcol{0} & \mcol{\it 1} &\mcol{\it 1} &\mcol{\it 1} & \mcol{0} &\mcol{\it 1} \\
\Xcline{1-4}{0.8pt}\Xcline{5-5}{2.4pt}\Xcline{6-10}{0.8pt}
\mcol{\it 0} & \mcol{\it 0} & \mcol{\it 0} & \mcol{\it 0} & 1 & \mcol{0} & \mcol{\it 1} &\mcol{\it 1} &\mcol{\it 1} & \mcol{\it 1} \\
\Xcline{1-5}{0.8pt}\Xcline{6-6}{2.4pt}\Xcline{7-10}{0.8pt}
\mcol{1} & \mcol{\it 0} & \mcol{\it 0} & \mcol{\it 0} & \mcol{\it 0} & 1 & \mcol{0} & \mcol{\it 1} &\mcol{\it 1} &\mcol{\it 1}  \\
\Xcline{1-6}{0.8pt}\Xcline{7-7}{2.4pt}\Xcline{8-10}{0.8pt}
\mcol{1} & \mcol{1} & \mcol{\it 0} & \mcol{\it 0} & \mcol{\it 0} & \mcol{\it 0} & 1 & \mcol{0} & \mcol{\it 1} &\mcol{\it 1}  \\
\Xcline{1-7}{0.8pt}\Xcline{8-8}{2.4pt}\Xcline{9-10}{0.8pt}
\mcol{1} & \mcol{1} & \mcol{1} & \mcol{\it 0} & \mcol{\it 0} & \mcol{\it 0} & \mcol{\it 0} & 1 & \mcol{0} & \mcol{\it 1}  \\
\Xcline{1-8}{0.8pt}\Xcline{9-9}{2.4pt}\Xcline{10-10}{0.8pt}
\mcol{1} & \mcol{1} & \mcol{1} & \mcol{1} & \mcol{\it 0} & \mcol{\it 0} & \mcol{\it 0} & \mcol{\it 0} & 1 & \mcol{0}   \\
\Xcline{1-10}{0.8pt} 
\mcol{\it 0} & \mcol{1} & \mcol{1} & \mcol{1} & \mcol{1} & \mcol{\it 0} & \mcol{\it 0} & \mcol{\it 0} & 1 & \mcol{0}  \\
\Xcline{1-9}{0.8pt} \Xcline{10-10}{2.4pt}
\end{array}$$
\caption{The matrix $Y_{10}$}
\label{fig4}
\end{figure}

Moreover, the set of trades $\{T(i,j)\mid (i,j)\in I\}$ retains the same properties with respect to $Y_{2m}$. Thus 
there exists a critical set in $Y_{2m}$ of size $3m^2-4m+1-2\alpha$ whenever $0\leq \alpha\leq m(m+1)/2-7$. 
 We are done. 
\end{proof}

\begin{theorem}
Let $m\geq 1$. For each $k$,  $m^2\leq k\leq 3m^2-4m+2$, there exists a critical set of size $k$ in some matrix from $\Lambda_{2m}^m$. 
\label{spek}
\end{theorem}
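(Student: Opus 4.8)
The plan is to realize the full interval $[m^2,\,3m^2-4m+2]$ as the union of the two ranges already constructed. Lemma \ref{lower} produces a critical set of every size in $[m^2,\,m^2+(m-1)^2]=[m^2,\,2m^2-2m+1]$, and, for $m\ge 4$, Lemma \ref{upper} produces one of every size in $[2m^2-5m+15,\,3m^2-4m+2]$. So the first step is to check that these two intervals together leave no integer uncovered, which happens exactly when the lower endpoint of the second does not exceed one more than the upper endpoint of the first, i.e. when $2m^2-5m+15\le (2m^2-2m+1)+1$. This simplifies to $13\le 3m$, so it holds for all $m\ge 5$; for such $m$ the theorem is immediate from the two lemmas.

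It then remains to dispose of the finitely many values $m\in\{1,2,3,4\}$, where either Lemma \ref{upper} does not apply or the two ranges fail to meet. For $m=1$ the target interval is the single value $\{1\}$, supplied by Lemma \ref{lower}. For $m=2$ the interval is $[4,6]$; Lemma \ref{lower} gives $k\in\{4,5\}$ and the endpoint $k=6=3m^2-4m+2$ is given by Theorem \ref{maxy}. For $m=3$ the interval is $[9,17]$: Lemma \ref{lower} covers $[9,13]$, Theorem \ref{maxy} covers $k=17$, and Example \ref{exm1} already exhibits a critical set of size $14$ in $\Lambda_6^3$, leaving only $k\in\{15,16\}$. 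For $m=4$ the interval is $[16,34]$, with Lemma \ref{lower} covering $[16,25]$ and Lemma \ref{upper} covering $[27,34]$, leaving only $k=26$.

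Thus only three sizes remain, and I would obtain each by an explicit construction verified through Theorem \ref{handier}: one fixes a matrix in $\Lambda_{2m}^m$ together with a South-East walk whose blocks satisfy the conditions of that theorem, tuning the corners of the walk (as in the proof of Lemma \ref{lower}) or replacing a single disjoint trade by its mate to shift the size by $2$, and then confirms directly that the block conditions of Theorem \ref{handier} hold. The main obstacle is precisely this boundary bookkeeping for small $m$: because the construction of Lemma \ref{upper} sacrifices seven cells near the corner (so that $|I|=m(m+1)/2-7$), its range stops short for small orders, and one must check by hand that the exceptional constructions genuinely meet the critical-set criterion rather than merely the defining-set criterion. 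The $m\ge 5$ portion, by contrast, is a one-line arithmetic verification.
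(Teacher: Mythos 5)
Your reduction is exactly the paper's: for $m\ge 5$ the intervals $[m^2,\,2m^2-2m+1]$ from Lemma \ref{lower} and $[2m^2-5m+15,\,3m^2-4m+2]$ from Lemma \ref{upper} cover everything (the paper checks $2m^2-5m+15\le 2m^2-2m+1$, you check the marginally weaker no-gap condition; both hold for $m\ge5$), and your bookkeeping for $m\le 4$ correctly isolates the same three exceptional pairs $(3,15)$, $(3,16)$, $(4,26)$. The one shortfall is that for these three cases you only promise ``an explicit construction verified through Theorem \ref{handier}'' without producing it, so the proof is not actually finished there; the paper closes them by observing that $Y_6$ from the proof of Lemma \ref{upper} already has a critical set of size $3m^2-4m+1=16$, and by exhibiting two explicit matrices (Figure \ref{filly}) for sizes $15$ and $26$. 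Note in particular that the $(3,16)$ case requires no new construction at all --- the matrix $Y_{2m}$ and its size-$(3m^2-4m+1)$ critical set are already established in Lemma \ref{upper}'s proof and remain valid for $m=3$ even though the trade-counting part of that lemma needs $m\ge 4$ --- so you could have discharged one of your three outstanding cases for free. The remaining two genuinely do need the explicit examples to be written down and checked against Theorem \ref{handier}.
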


\begin{proof}
For $m\leq 4$, the result is given by Theorem \ref{maxy}, Lemma \ref{lower}, Lemma \ref{upper} and Example \ref{exm1}, except for the following 
cases: $(m,k)\in\{(3,15),(3,16),(4,26)\}$. The case $m=3$ and $k=16$ can be found in $Y_6$ as in the previous proof; the remaining two cases are given in Figure \ref{filly}. 
Otherwise $m\geq 5$ and $2m^2-5m+15\leq m^2+(m-1)^2$ and the theorem follows from Lemmas \ref{lower} and \ref{upper}.  
\end{proof}

\begin{figure}
$$\begin{array}{|c!{\vrule width1.6pt}c
!{\vrule width1.6pt}c!{\vrule width1.6pt}c!{\vrule width1.6pt}c!{\vrule width1.6pt}c!{\vrule width2.4pt}}
\Xcline{1-1}{2.4pt}\Xcline{2-6}{0.8pt}
1 & \mcol{0} & \mcol{\it 1} & \mcol{\it 1} &\mcol{0} & \mcol{0}  \\
\Xcline{1-1}{0.8pt}\Xcline{2-2}{2.4pt}\Xcline{3-6}{0.8pt}
\mcol{1} & 1 & \mcol{0} & \mcol{0} &\mcol{\it 1} & \mcol{0}    \\
\Xcline{1-2}{0.8pt}\Xcline{3-3}{2.4pt}\Xcline{4-6}{0.8pt}
\mcol{\it 0} & \mcol{\it 0} & 1 & \mcol{0} &\mcol{\it 1} & \mcol{\it 1}  \\
\Xcline{1-3}{0.8pt}\Xcline{4-4}{2.4pt}\Xcline{5-6}{0.8pt}
\mcol{\it 0} & \mcol{1} & \mcol{\it 0} & 1 &\mcol{0} & \mcol{\it 1}  \\
\Xcline{1-4}{0.8pt}\Xcline{5-5}{2.4pt}\Xcline{6-6}{0.8pt}
\mcol{1} & \mcol{\it 0} & \mcol{1} & \mcol{\it 0} & 1 & \mcol{0}   \\
\Xcline{1-5}{0.8pt}\Xcline{6-6}{2.4pt}
 \mcol{\it 0} & \mcol{1} & \mcol{\it 0} & \mcol{1} & \mcol{\it 0} & 1   \\
\Xcline{1-6}{0.8pt}
\end{array} \label{$|C| = 14$}
\hphantom{rawr}
\begin{array}
{|c!{\vrule width1.6pt}c
!{\vrule width1.6pt}c!{\vrule width1.6pt}c!{\vrule width1.6pt}c!{\vrule width1.6pt}c!{\vrule width1.6pt}c!
{\vrule width1.6pt}c!{\vrule width2.4pt}}
\Xcline{1-1}{2.4pt}\Xcline{2-8}{0.8pt}
1 & \mcol{0} & \mcol{\it 1} &\mcol{\it 1} &\mcol{\it 1} & \mcol{0} &\mcol{0} & \mcol{0}  \\
\Xcline{1-1}{0.8pt}\Xcline{2-2}{2.4pt}\Xcline{3-8}{0.8pt}
\mcol{1} & 1 & \mcol{0} &\mcol{0} & \mcol{\it 1} &\mcol{\it 1} &\mcol{0} & \mcol{0} \\
\Xcline{1-2}{0.8pt}\Xcline{3-3}{2.4pt}\Xcline{4-8}{0.8pt}
\mcol{1} & \mcol{1} & 1 &\mcol{0} &\mcol{0} & \mcol{0} &\mcol{\it 1} & \mcol{0} \\
\Xcline{1-3}{0.8pt}\Xcline{4-4}{2.4pt}\Xcline{5-8}{0.8pt}
\mcol{1} & \mcol{\it 0} & \mcol{\it 0} & 1 &\mcol{0} & \mcol{0} &\mcol{\it 1} & \mcol{\it 1} \\
\Xcline{1-4}{0.8pt}\Xcline{5-5}{2.4pt}\Xcline{6-8}{0.8pt}
\mcol{\it 0} & \mcol{\it 0} & \mcol{\it 0} & \mcol{1} & 1 & \mcol{0} &\mcol{\it 1} & \mcol{\it 1} \\
\Xcline{1-5}{0.8pt}\Xcline{6-6}{2.4pt}\Xcline{7-8}{0.8pt}
\vmcol{0.8pt}{\it 0} & \mcol{1} & \mcol{\it 0} & \mcol{\it 0} & \mcol{1} & 1 &\mcol{0} & \mcol{\it 1} \\
\Xcline{1-6}{0.8pt}\Xcline{7-7}{2.4pt}\Xcline{8-8}{0.8pt}
\vmcol{0.8pt}{\it 0} & \mcol{1} & \mcol{1} & \mcol{\it 0} & \mcol{\it 0} & \mcol{1} & 1 & \mcol{0}\\
\Xcline{1-7}{0.8pt}\Xcline{8-8}{2.4pt}
\vmcol{0.8pt}{\it 0} & \mcol{\it 0} & \mcol{1} & \mcol{1} & \mcol{\it 0} & \mcol{1} & \mcol{\it 0} & 1 \\
\Xcline{1-8}{0.8pt}
\end{array}$$
\caption{Critical sets of size $15$ and $26$.}
\label{filly}
\end{figure}

\section{A lower bound on the supremum}

In this section we show that every element of $\Lambda_{2m}^m$ contains a critical set of size greater than $3m(m-1)/2$. 
Thus:
\begin{theorem}
{\rm sup}$(\Lambda_{2m}^m)\geq \lceil (3m^2-2m+1)/2\rceil $.  
\label{maineee}
\end{theorem}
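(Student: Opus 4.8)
The plan is to unwind the definition ${\rm sup}(\Lambda_{2m}^m)=\min\{{\rm lcs}(M)\mid M\in\Lambda_{2m}^m\}$ and prove the equivalent pointwise statement that \emph{every} $M\in\Lambda_{2m}^m$ possesses a critical set of size at least $\lceil(3m^2-2m+1)/2\rceil$. Since ${\rm lcs}(M)$ is, by definition, at least the size of any single critical set of $M$, it suffices to exhibit one such set for an arbitrary $M$. The first step is to normalize $M$: the $1$'s of $M$ form an $m$-regular bipartite graph between rows and columns, so by Hall's theorem they contain a perfect matching, and permuting rows and columns (which changes neither membership in $\Lambda_{2m}^m$ nor any critical set size) places that matching on the main diagonal. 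With the diagonal now consisting entirely of $1$'s, a South-East walk that hugs the diagonal automatically satisfies half of the requirements of Theorem \ref{handier}: each below-bordering block $L_{i,i}$ is a single diagonal cell, which contains $1$.

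The engine of the argument is to produce two critical sets of $M$ at once and to bound their combined size. I would use the row-and-column reversal $M\mapsto M^{\rm rev}$ (reverse the order of the rows and of the columns); this is an isomorphism of $(0,1)$-matrices that fixes the all-$1$ diagonal but interchanges the region strictly above the diagonal with the region strictly below it, so ${\rm lcs}(M^{\rm rev})={\rm lcs}(M)$. Applying Theorem \ref{handier} to $M$ and to $M^{\rm rev}$ with diagonal-hugging walks produces critical sets $C$ and $C'$ occupying complementary sides of the diagonal, with ${\rm lcs}(M)\geq|C|$ and ${\rm lcs}(M)={\rm lcs}(M^{\rm rev})\geq|C'|$. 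The guiding bookkeeping observation is that the \emph{defining-set} versions of these two objects, namely the $0$'s below a walk together with the $1$'s above it, and its mirror (valid defining sets by Theorem \ref{handy}), together count each of the $4m^2-2m$ off-diagonal cells exactly once, so the two defining counts sum to exactly $4m^2-2m$.

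Passing from these defining sets to genuine critical sets forces the walks to be chosen so that the \emph{above}-bordering blocks are all $0$ as well, which generally requires deviating from the exact diagonal and thereby moves some cells from one side of the count to the other. The heart of the proof, and the step I expect to be the main obstacle, is to bound the total net loss across the two walks by $m^2-1$; here the constant row and column sums must be used decisively, since the naive extremal configurations (such as a triangular matrix) are forbidden by $m$-regularity. Granting this bound, $|C|+|C'|\geq(4m^2-2m)-(m^2-1)=3m^2-2m+1$, whence ${\rm lcs}(M)\geq\max(|C|,|C'|)\geq\tfrac12(|C|+|C'|)\geq(3m^2-2m+1)/2$; the ceiling follows because ${\rm lcs}(M)$ is an integer. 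As $M$ was arbitrary, this yields ${\rm sup}(\Lambda_{2m}^m)\geq\lceil(3m^2-2m+1)/2\rceil$, and one should sanity-check the construction against Corollary \ref{larger} and the small cases $m=1,2$ to confirm the constants.
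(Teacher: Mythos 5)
Your overall strategy---produce two disjoint critical sets of $M$ whose combined size is at least $3m^2-2m+1$ and observe that the larger one then has size at least $\lceil(3m^2-2m+1)/2\rceil$---is exactly the paper's strategy, and the final averaging step is fine. But the proposal has a genuine gap at precisely the point you flag as ``the heart of the proof'': you never establish that the net loss in passing from the two complementary defining sets (total size $4m^2-2m$) to two critical sets contained in them can be bounded by $m^2-1$. This is not a technicality; it is the entire content of the theorem. A minimal defining subset of $\{0\text{'s below the diagonal}\}\cup\{1\text{'s above the diagonal}\}$ could a priori be as small as $m^2$ (Theorem \ref{smally}), and if both of your critical sets degenerated to size near $m^2$ the sum would be about $2m^2$, well short of $3m^2-2m+1$. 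So you cannot take arbitrary minimal defining subsets; you must exhibit a specific pair of large critical sets, and your proposal supplies no mechanism for doing so. Moreover, verifying that a candidate set is \emph{critical} (not merely defining) requires, by Corollary \ref{itscri}, producing for each of its elements a cycle meeting the set only there; your diagonal-hugging walks generally violate the bordering-block conditions of Theorem \ref{handier}, and you do not explain how to repair this.

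The paper closes this gap with an explicit construction rather than a loss estimate. It normalizes $M$ so that the first row and first column have all their $1$'s in the first $m$ positions, partitions $M$ into nine blocks $M_{i,j}$ via $R_1=C_1=\{1\}$, $R_2=C_2=\{2,\dots,m\}$, $R_3=C_3=\{m+1,\dots,2m\}$, and builds (Lemmas \ref{ian1} and \ref{ian2}) two disjoint critical sets: $C_1$ containing $(1,1,1)$ and every $0$ of $M_{2,2}\cup M_{2,3}\cup M_{3,2}$, and $C_2$ containing every $1$ of $M_{2,3}\cup M_{3,2}\cup M_{3,3}$. Criticality of the forced elements is checked by exhibiting explicit $4$- and $6$-cycles through each, using the regularity of the row and column sums, and a counting argument with $\alpha$ (the number of $0$'s in $M_{2,2}$) gives $|C_1|+|C_2|\geq 2m(m-1)+\alpha+(m^2-\alpha)+1=3m^2-2m+1$. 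If you want to salvage your symmetric two-walk approach, you would need an analogous explicit argument guaranteeing that each of your two defining sets contains a critical set retaining all but a controlled number of its elements; as written, the proposal assumes the conclusion at the decisive step.
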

  
In the following, 
 let $M\in \Lambda_{2m}^m$. By rearranging rows and columns, 
we may assume that cells $(i,0)$ and $(0,j)$ contain $1$ whenever $i\leq m$ or $j\leq m$. 
Let $R_1=C_1=\{1\}$, $R_2=C_2=\{2,3,\dots ,m\}$ and $R_3=C_3=\{m+1,m+2,\dots ,2m\}$.  
These sets give partitions of the rows and columns, so that we may define the subarray 
$M_{i,j}$ to be the intersection of the rows from $R_i$ and the columns from $C_j$. 

\begin{lemma}
Let $M$ be any element of $\Lambda_{2m}^m$. 
Then there exists a critical set $C_1$ of $M$ including:
\begin{itemize}
\item $(1,1,1)$;
\item no elements from $M_{1,2}\cup M_{1,3}\cup M_{2,1}\cup M_{3,1}$;
\item each $0$ from $M_{2,2}\cup M_{2,3}\cup M_{3,2}$;
\item no $1$'s from $M_{2,2}\cup M_{2,3}\cup M_{3,2}\cup M_{3,3}$. 
\end{itemize}
\label{ian1}
\end{lemma}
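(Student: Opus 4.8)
The plan is to exhibit the required set directly through Theorem \ref{handier}, by choosing one rearrangement of $M$ together with a single South-East walk $W$ whose induced critical set has the four listed properties. First I would reorder the rows so that, from top to bottom, they appear in the order $R_1, R_3, R_2$, and the columns so that, from left to right, they appear as $C_2, C_3, C_1$. In these coordinates $M_{1,2}$ sits in the top-left corner, the single cell $(1,1)$ (block $M_{1,1}$) sits in the top-right corner, and $M_{2,1}$ sits in the bottom-right corner. I then take $W$ to run East across the whole of $M_{1,2}$ along the top edge, drop South one unit, cross $M_{3,3}$ by a walk $W_{3,3}$ described below, drop South to the bottom of the $R_3$-rows, step East across the $C_1$-column, and finish South to the bottom-right corner. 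A direct check of the nine blocks shows that $M_{1,1}, M_{1,3}, M_{3,1}$ lie above $W$ while $M_{1,2}, M_{2,1}, M_{2,2}, M_{2,3}, M_{3,2}$ lie below it; only $M_{3,3}$ is genuinely crossed.

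The block the walk must cut through is $M_{3,3}$, and this is the crux, since it is the only one of the nine blocks that need not be monochromatic. Here I would use the auxiliary fact that \emph{every} $(0,1)$-matrix $N$ has a critical set consisting entirely of $0$-entries. To see this, let $W_0$ be the upper-left boundary of the smallest region of $N$ that is closed under moving downward and leftward and that contains all the $1$-entries of $N$; then no $1$ lies above $W_0$, so by Theorem \ref{handy} the collection of $0$'s lying below $W_0$ is a defining set of $N$, and any minimal defining subset of it is a critical set made only of $0$'s. Reading this critical set back through Theorem \ref{handier} yields a tight walk $W_{3,3}$ through a rearrangement of $M_{3,3}$ with every $1$ of $M_{3,3}$ below it, every below-bordering block equal to $1$, and every above-bordering block equal to $0$. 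Because the internal reordering of the $R_3$-rows and $C_3$-columns leaves the uniform membership of the neighbouring blocks $M_{3,2}, M_{2,3}, M_{1,3}, M_{3,1}$ unchanged, I can glue $W_{3,3}$ into $W$ at the top-left and bottom-right corners of the $M_{3,3}$-region without disturbing anything else.

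It then remains to verify the hypotheses of Theorem \ref{handier} for $W$. The walk runs corner to corner; within $M\setminus C$ the cells below $W$ are exactly the $1$'s of $M_{1,2}\cup M_{2,1}\cup M_{2,2}\cup M_{2,3}\cup M_{3,2}$ together with the $1$'s of $M_{3,3}$ below $W_{3,3}$, while the cells above $W$ are exactly the $0$'s of $M_{1,3}\cup M_{3,1}$ together with the $0$'s of $M_{3,3}$ above $W_{3,3}$. The blocks bordering $W$ from below are $M_{1,2}$, the below-bordering blocks of $W_{3,3}$, and $M_{2,1}$, all of which consist of $1$'s; the blocks bordering $W$ from above are $M_{1,3}$, the above-bordering blocks of $W_{3,3}$, and $M_{3,1}$, all of which consist of $0$'s. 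Hence $C$ is a critical set of $M$. Finally, since $C$ consists precisely of the $0$'s below $W$ and the $1$'s above $W$, its membership is exactly as claimed: the only $1$ above $W$ is $(1,1)$; the blocks $M_{2,2}, M_{2,3}, M_{3,2}$ lie wholly below $W$, so each of their $0$'s belongs to $C$; no $1$ of $M_{2,2}\cup M_{2,3}\cup M_{3,2}\cup M_{3,3}$ lies above $W$; and $M_{1,2}, M_{1,3}, M_{2,1}, M_{3,1}$ contribute nothing to $C$.

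I expect the only real obstacle to be the treatment of $M_{3,3}$: unlike the other eight blocks it can contain both symbols, so the walk cannot simply skirt it, as that would place a $0$ on a below-bordering block and destroy minimality. The all-$0$ critical set fact is precisely what lets the walk thread $M_{3,3}$ tightly while keeping every $1$ of that block below $W$. Degenerate situations (for instance $M_{3,3}$ already forced by its line sums, so that its sub-critical-set is empty, or the small case $m=1$ where $R_2$ and $C_2$ are empty) are covered by reading the same construction trivially.
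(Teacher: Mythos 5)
Your route is genuinely different from the paper's: the paper builds the set directly (all the prescribed entries plus \emph{every} $0$ of $M_{3,3}$), checks it is a defining set by counting $0$'s and $1$'s in lines, prunes only inside $M_{3,3}$, and then certifies minimality by exhibiting explicit $4$- and $6$-cycles through $M_{1,1}$ and $M_{3,3}$ for each prescribed entry. You instead manufacture one South-East walk for a rearrangement of $M$ and invoke Theorem \ref{handier} once. That strategy is viable and most of your verification is sound, but there is a genuine gap at the two seams where $W_{3,3}$ is glued in. The blocks of the glued walk are cut out by \emph{its} corners, not by the boundary of the $M_{3,3}$-region. If the walk certifying the all-zero critical set of $M_{3,3}$ were to begin with a South step, the glued walk's first South run (at the right edge of the $C_2$-columns) would extend past row $1$ into the $R_3$-rows, so the first diagonal block $L_{1,1}$ would consist of $M_{1,2}$ \emph{together with} some rows of $M_{3,2}$ --- and $M_{3,2}$ need not be all $1$'s, so the ``each block bordering $W$ from below contains $1$'' condition of Theorem \ref{handier} fails. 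Symmetrically, if $W_{3,3}$ ends with an East step, the last diagonal block swallows part of $M_{2,3}$ along with $M_{2,1}$. Your sentence ``I can glue $W_{3,3}$ into $W$ \dots without disturbing anything else'' silently assumes that $W_{3,3}$ starts East and ends South, and nothing in your construction guarantees this a priori.

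The gap is repairable, but the repair needs to be stated. Every row of $M_{3,3}$ contains a $1$: a row of $R_3$ has $m$ ones, none in column $1$ by the normalisation, and $M_{3,2}$ has only $m-1$ columns; likewise every column of $M_{3,3}$ contains a $1$. Since your $W_{3,3}$ has every $1$ of $M_{3,3}$ below it, the $1$ in its first row forces the walk to begin with an East run along the top edge of the block, and the $1$ in its last column forces it to end with a South run along its right edge --- in any rearrangement of $R_3$ and $C_3$. With that observation inserted, the first and last diagonal blocks of the glued walk are exactly $M_{1,2}$ and $M_{2,1}$, your block-by-block check goes through, and the resulting critical set is $\{(1,1,1)\}$ together with the $0$'s of $M_{2,2}\cup M_{2,3}\cup M_{3,2}$ and the $0$'s of $M_{3,3}$ below $W_{3,3}$, as the lemma requires. (The same counting also shows $M_{3,3}$ contains at least one $1$, which you need for your ``tight walk over the ones'' not to be vacuous.)
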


\begin{proof}
First, let $D$ be the subset of $M$ defined as above with the extra property that {\em every} $0$ from 
$M_{3,3}$ is included. 
We first show that $D$ is a defining set for $M$. 
Let $M'$ be any completion of $D$ to a matrix in $\Lambda_{2m}^m$.
Within $D$, there are $m$ $0$'s in each row from $R_2$ and $m$ $0$'s in each column from $C_2$, so $M'$ and $M$ correspond in rows from $R_2$ and columns from $C_2$. In turn, $M$ and $M'$ each have $m$ $1$'s in the first row and column within corresponding cells.
Finally, $M$ and $M'$ have $m$ $0$'s in corresponding cells of columns from $C_3$, so $M'=M$.  
Thus $D$ is a defining set for $M$. 

Next, recursively remove elements of $D$ from $M_{3,3}$ to obtain a set $C_1$ such that:
\begin{itemize} 
\item $C_1$ is a defining set of $M$; and
\item there exists no element $(r,c,e)$ of $C_1$ in $M_{3,3}$ such that $C_1\setminus \{(r,c,e)\}$ is a defining set of $M$.  
\end{itemize}
We claim that $C_1$ is in fact a critical set of $M$. It remains to show that the removal of any element 
of $C_1$ not in $M_{3,3}$ results in more than one completion. 

First consider $(1,1,1)\in C_1$. Now, $(1,m+1,0)\in M\setminus C_1$ and there exists a row $i\geq m$ such that  
$(r,m+1,1)\in M\setminus C_1$ (otherwise there are more than $m$ $0$'s in column $m+1$). 
Next observe that $(r,1,0)\in M\setminus C_1$. Thus 
$$\{(1,1,1),(1,m+1,0),(r,m+1,1),(r,1,0)\}$$
is a cycle in $M$ intersecting 
$C_1$ only at $(1,1,1)$. Swapping the entries in the cycle gives an alternate completion of $M\setminus \{(1,1,1)\}$. 

Next, let $(i,j,0)\in C_1\cap M_{2,3}$. Then there exists row $i'\in R_3$ such that $(i',j,1)\in M\setminus C_1$ (otherwise, as above, there are more than $m$ $0$'s in column $j$). Then:   
$$\{(i,j,0),(i',j,1),(i',1,0),(i,1,1)\}$$
is a cycle intersecting $C_1$ only at $(i,j,0)$. 
The case when $(i,j,0)\in C_1\cap M_{3,2}$ is similar. Finally, let $(i,j,0)\in C_1\cap M_{2,2}$. 
Let $(i',j')$ be any cell of $M_{3,3}$ containing $1$.  
Then:    
$$\{(i,j,0),(i',j',1),(i',1,0),(1,j',0),(i,1,1),(j,1,1)\}$$
intersects $C_1$ only at $(i,j,0)$. 
\end{proof}

\begin{lemma}
Let $M$ be any element of $\Lambda_{2m}^m$. 
There exists a critical set  $C_2$ of $M$ including:
\begin{itemize}
\item no elements from $M_{1,1}\cup M_{1,2}\cup M_{1,3}\cup M_{2,1}\cup M_{3,1}$;
\item each $1$ from $M_{3,3}\cup M_{2,3}\cup M_{3,2}$;
\item no $0$'s from $M_{2,2}\cup M_{2,3}\cup M_{3,2}\cup M_{3,3}$. 
\end{itemize}
\label{ian2}
\end{lemma}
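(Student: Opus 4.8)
The plan is to run the template of Lemma \ref{ian1} with the roles of $0$ and $1$ interchanged and the ``free'' block moved from $M_{3,3}$ to $M_{2,2}$. First I would take $D$ to consist of \emph{every} $1$ lying in the four interior blocks $M_{2,2}\cup M_{2,3}\cup M_{3,2}\cup M_{3,3}$ and nothing else; in particular $D$ contains no interior $0$ and no entry of the first row or first column, so it already has the shape demanded (apart from the eventual thinning of $M_{2,2}$). The first claim is that $D$ is a defining set. To see this, let $M'$ be any completion of $D$. Each row of $R_3$ has all $m$ of its ones inside $M_{3,2}\cup M_{3,3}$ (because $M_{3,1}$ is all zero), and these lie in $D$; as the row sum is $m$ this pins the whole row, so $M'$ agrees with $M$ throughout $R_3$, and symmetrically throughout $C_3$. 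Hence $M_{3,1}$ is forced to zero in $M'$, so column $1$ must place its $m$ ones in the $m$ cells in rows $\{1\}\cup R_2$, forcing every such cell to $1$; the analogous argument using $C_3$ forces row $1$. Finally, for each row of $R_2$ the column-$1$ entry and the $C_3$-portion are now fixed and the ones of $M_{2,2}$ lie in $D$, so the row sum pins the remaining cells of $M_{2,2}$ to $0$. Thus $M'=M$.

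Next I would recursively delete ones from $M_{2,2}$, discarding an entry whenever the remainder is still a defining set, until no entry of $M_{2,2}$ is removable; call the result $C_2$. By construction $C_2$ keeps every $1$ of $M_{2,3}\cup M_{3,2}\cup M_{3,3}$, contains no interior $0$ and no entry of the first row or column, so all four bullet points hold, and it remains only to show $C_2$ is \emph{critical}. By Corollary \ref{itscri} it suffices to give, for each entry of $C_2$, a cycle of $M$ meeting $C_2$ in that entry alone. For entries inside $M_{2,2}$ this is automatic from the stopping rule of the deletion: non-removability together with Lemma \ref{itsdef} yields a private cycle. The real work is the entries of $M_{2,3}\cup M_{3,2}\cup M_{3,3}$, and the key structural fact I would exploit is that \emph{every} cell of the first row and first column lies in $M\setminus C_2$, as does every interior $0$; these are exactly the cells from which the cycles must be built.

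For a target $(i,j,1)\in M_{3,3}$ I would use the four-cycle $\{(i,j,1),(i,1,0),(1,1,1),(1,j,0)\}$, whose other three cells lie in $M_{3,1}$, $M_{1,1}$ and $M_{1,3}$ and so avoid $C_2$. For $(i,j,1)\in M_{2,3}$ (so $i\in R_2$, $j\in C_3$) I would instead use $\{(i,j,1),(i,j',0),(1,j',1),(1,j,0)\}$ with $j'\in C_2$ chosen so that $(i,j')=0$: the two cells in row $1$ lie in $M_{1,2}$ and $M_{1,3}$, while $(i,j',0)$ is an interior $0$, so again only the target meets $C_2$. The case $(i,j,1)\in M_{3,2}$ is the transpose, using $\{(i,j,1),(i',j,0),(i',1,1),(i,1,0)\}$ with $i'\in R_2$ and $(i',j)=0$. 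I expect the main obstacle to be exactly the existence of these interior zeros: I would argue it from the line sums, observing that row $i$ already carries a $1$ in column $1$ (in $M_{2,1}$) and the target $1$ (in $M_{2,3}$), so at most $m-2$ of the $m-1$ cells of $M_{2,2}$ in row $i$ can be $1$, leaving a $0$ to serve as $(i,j')$; the transpose case uses the column sum identically. With every entry of $C_2$ equipped with a private cycle, $C_2$ is a minimal defining set, hence a critical set of the required form.
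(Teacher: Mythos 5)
Your proposal is correct and takes essentially the same route as the paper: the same initial set $D$ of all interior $1$'s, the same forcing chain (rows of $R_3$ and columns of $C_3$, then the first row and column, then $M_{2,2}$), the same recursive thinning of $M_{2,2}$, and the very same four-cycles through the first row and column to certify criticality of the retained entries. The only difference is that you explicitly verify the existence of the interior $0$ needed for the $M_{2,3}$ and $M_{3,2}$ cycles, which the paper leaves implicit.
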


\begin{proof}
First, let $D$ be the subset of $M$ defined as above with the extra property that {\em every} $1$ from 
$M_{3,3}$ is included. 

We claim that $D$ is a defining set for $M$. 
Let $M'$ be any completion of $D$ to a matrix in $\Lambda_{2m}^m$.
Within $D$, there are $m$ $1$'s in each row from $R_3$ and $m$ $1$'s in each column from $C_3$, so $M'$ and $M$ correpsond in rows from $R_3$ and columns from $C_3$. In turn, $M$ and $M'$ each have $m$ $0$'s in the first row and column within corresponding cells.
Finally, $M$ and $M'$ have $m$ $1$'s in corresponding cells of columns from $C_2$, so $M'=M$.  
Thus our claim is true. 
 
Recursively remove elements of $D$ from $M_{2,2}$ to obtain a set $C_2$ such that:
\begin{itemize} 
\item $C_2$ is a defining set of $M$; and
\item there exists no element $(r,c,e)$ of $C_2$ in $M_{2,2}$ such that $C_2\setminus \{(r,c,e)\}$ is a defining set of $M$.  
\end{itemize}
We claim that $C_2$ is in fact a critical set of $M$. It remains to show that the removal of any element 
of $C_2$ not in $M_{2,2}$ results in more than one completion. 

First, let $(i,j,1)\in C_2\cap M_{3,3}$. Then, the intersection of  
 $$\{(i,j,1),(i,1,0),(1,j,0),(1,1,1)\}$$
with  $C_2$ is $\{(i,j,1)\}$. 
Next, let $(i,j,1)\in C_2\cap M_{2,3}$. Let $(i,j')$ be a cell of $M_{2,2}$ containing $0$. 
Then  the intersection of  
 $$\{(i,j,1),(i,j',0),(1,j,0),(1,j',1)\}$$
with  $C_2$ is $\{(i,j,1)\}$. 
The case $(i,j,1)\in C_2\cap M_{3,2}$ is similar. 
\end{proof}

Now we are ready to prove Theorem \ref{maineee}. Let $\alpha$ be the number of $0$'s in subarray $M_{2,2}$. Then there
are $m(m-1)-\alpha$ $0$'s in $M_{2,3}$ and in turn, $\alpha$ $0$'s in $M_{3,3}$. Thus there are $m^2-\alpha$ $1$'s in $M_{3,3}$.  
Let $C_1$ and $C_2$ be critical sets given by Lemmas \ref{ian1} and \ref{ian2} above. 
Then $C_1$ and $C_2$ are disjoint and:
$$|C_1|+|C_2|\geq |M_{2,3}|+|M_{3,2}|+\alpha+(m^2-\alpha)+1= 3m^2-2m+1.$$  
Theorem \ref{maineee} follows.

\section{An upper bound for the supremum}

Define $B_{2m}\in \Lambda_{2m}^m$ to be the matrix
such that cell $(i,j)$ contains the element $(\lfloor (i-1)/m\rfloor, \lfloor (j-1)/m \rfloor)$, where $1\leq i,j,\leq 2m$. 
Observe that each quadrant of $B_{2m}$ contains either only $0$ or only $1$. 
Via Theorem \ref{handier} we can classify all critical sets in $B_{2m}$. 
This in turn will yield an upper bound for {\rm sup}$(\Lambda_{2m}^m)$. 

Let $C$ be a critical set of $B_{2m}$. Let $W$ be a walk as in Theorem \ref{handier}, with the ``corners'' of $W$ and blocks defined as in Section 2. 
Within $B_{2m}$, whenever the entries of cells $(i,j)$, $(i',j)$, $(i',j')$ are known, the entry of cell $(i,j')$ is uniquely determined. 
It follows that  
each cell of $L_{i,j}$ contains $1$ if $i+j$ is even and $0$ otherwise. 

For each $k$, $1\leq k\leq L$, let $s_k:=r_k-r_{k-1}$ and 
for each $k$, $1\leq k\leq L'$, let  $t_k:=c_k-c_{k-1}$. 
Observe that for each $i$ and $j$ such that $1\leq i\leq L$ and $1\leq j\leq L'$, 
$|L_{i,j}|=s_it_j$.  
The size of critical set $C$ is equal to:
$$
\sum_{i+j\equiv 1\mod{2}, i>j} |L_{i,j}|+
 \sum_{i+j\equiv 0\mod{2}, i<j} |L_{i,j}|.$$
The following lemma is immediate. 

\begin{lemma}
The size of any critical set $C$ of $B_{2m}$ is equal to:
$$
\sum_{i+j\equiv 1\mod{2}, i>j} s_{i}t_{j}+
 \sum_{i+j\equiv 0\mod{2}, i<j} s_{i}t_{j}.
$$
\label{sizeformula}
\end{lemma}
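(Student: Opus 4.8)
The plan is to obtain the formula as an immediate substitution into the block-sum expression already derived in the paragraph preceding the statement, so the real content is simply recording why each contributing block is counted with weight $s_i t_j$.

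First I would recall the description of $C$ coming from Theorem~\ref{handier}: since $M\setminus C$ contains only $1$'s below $W$ and only $0$'s above $W$, the critical set $C$ is precisely the set of $0$'s of $B_{2m}$ lying below $W$ together with the $1$'s lying above $W$. Because $W$ is a monotone South-East staircase running along the corners $P_i$, each block $L_{i,j}$ lies wholly on one side of $W$: it lies below $W$ exactly when $i\geq j$ (the diagonal blocks $L_{i,i}$ border $W$ from below) and above $W$ exactly when $i<j$ (the blocks $L_{i,i+1}$ border $W$ from above). Combining this with the already-noted fact that a cell of $L_{i,j}$ holds $1$ iff $i+j$ is even, a $0$ lying below $W$ sits in a block with $i\geq j$ and $i+j$ odd---which forces $i>j$---while a $1$ lying above $W$ sits in a block with $i<j$ and $i+j$ even. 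This reproduces the index sets of the two sums and hence the displayed expression for $|C|$ in terms of the block sizes $|L_{i,j}|$.

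To finish, I would note that block $L_{i,j}$ occupies the rows $r_{i-1}<r\leq r_i$ and the columns $c_{j-1}<c\leq c_j$, so it has exactly $s_i t_j$ cells; substituting $|L_{i,j}|=s_i t_j$ into
$$\sum_{i+j\equiv 1\mod{2},\, i>j} |L_{i,j}|+\sum_{i+j\equiv 0\mod{2},\, i<j} |L_{i,j}|$$
gives the stated formula. There is no genuine obstacle here---the lemma is ``immediate'' once the preceding paragraph is in place---and the only step deserving a moment's care is verifying that every block lies entirely on one side of $W$ and correctly identifying that side (below iff $i\geq j$), which follows from the monotonicity of the South-East walk together with the bordering conditions of Theorem~\ref{handier}.
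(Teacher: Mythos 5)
Your proof is correct and follows exactly the route the paper intends: the paper establishes the block-parity description of $B_{2m}$, the expression for $|C|$ as a sum of $|L_{i,j}|$ over the stated index sets, and the identity $|L_{i,j}|=s_it_j$ in the paragraph preceding the lemma, and then declares the lemma immediate. You have simply written out those same steps in full, including the correct observation that $i\geq j$ together with $i+j$ odd forces $i>j$.
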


In the extreme case when $s_1=s_2=\dots =s_{2m}=t_1=t_2=\dots =t_{2m}=1$, 
we have the following. 

\begin{corollary}
There exists a critical set in $B_{2m}$ of size $2m^2-m$. 
\label{crrr}
\end{corollary}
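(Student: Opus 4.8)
The plan is to apply Lemma \ref{sizeformula} directly to the walk whose every step has unit length. First I would confirm that the choice $s_1=\dots=s_{2m}=t_1=\dots=t_{2m}=1$ really corresponds to an admissible walk of the kind guaranteed by Theorem \ref{handier}: this is the staircase bordering the main diagonal, so every block $L_{i,j}$ is a single cell and the block indices run over $1\le i,j\le 2m$ (i.e.\ $L=L'=2m$). Since in $B_{2m}$ the cell of $L_{i,j}$ holds $1$ exactly when $i+j$ is even, the diagonal blocks $L_{i,i}$ contain $1$ (as $2i$ is even) and the blocks $L_{i,i+1}$ contain $0$ (as $2i+1$ is odd), matching the requirement that blocks bordering $W$ from below contain $1$ and those bordering $W$ from above contain $0$. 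Hence this walk does define a critical set.

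With admissibility settled, I would read off the size from Lemma \ref{sizeformula}. With all $s_i=t_j=1$ the two sums simply count index pairs, so
$$|C|=\#\{(i,j):1\le i,j\le 2m,\ i>j,\ i+j\text{ odd}\}+\#\{(i,j):1\le i,j\le 2m,\ i<j,\ i+j\text{ even}\}.$$
The key trick is the transposition $(i,j)\mapsto(j,i)$: it preserves the parity of $i+j$ and interchanges the conditions $i<j$ and $i>j$, so the second count equals $\#\{(i,j):i>j,\ i+j\text{ even}\}$. Adding the two contributions therefore collapses the parity distinction, leaving
$$|C|=\#\{(i,j):1\le i,j\le 2m,\ i>j\}=\binom{2m}{2}=2m^2-m.$$

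I expect the only real subtlety to be the bookkeeping in the first paragraph, namely verifying that the all-ones step vector is a legitimate instance of the walk in Theorem \ref{handier} and that both sums in Lemma \ref{sizeformula} range over the full index set $\{1,\dots,2m\}$. Once that is in place, the computation is a one-line symmetry argument and requires no delicate estimation.
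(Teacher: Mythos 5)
Your proposal is correct and follows the paper's own route: the paper likewise obtains this critical set by taking the staircase walk with all $s_i=t_j=1$ and reading the size off Lemma \ref{sizeformula}. Your transposition argument evaluating the two sums as $\binom{2m}{2}=2m^2-m$ is a clean way to finish the computation the paper leaves implicit.
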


As per the above corollary, we exhibit a critical set of size $28$ in $B_{8}$ in Figure \ref{suprri}. 

\begin{figure}
$$
\begin{array}{|c!{\vrule width1.6pt}c
!{\vrule width1.6pt}c!{\vrule width1.6pt}c!{\vrule width1.6pt}c!{\vrule width1.6pt}c!{\vrule width1.6pt}c!
{\vrule width1.6pt}c!{\vrule width1.6pt}}
\Xcline{1-1}{1.6pt}\Xcline{2-8}{0.8pt}
1 & \multicolumn{1}{c|}{0} &\multicolumn{1}{|c|}{\it 1} & \multicolumn{1}{|c|}{0} & \multicolumn{1}{|c|}{\it 1} & \multicolumn{1}{|c|}{0} & \multicolumn{1}{|c|}{\it 1} & \multicolumn{1}{|c|}{0} \\
\Xcline{1-1}{0.8pt}\Xcline{2-2}{1.6pt}\Xcline{3-8}{0.8pt}
\multicolumn{1}{|c|}{\it 0} &1 & \multicolumn{1}{c|}{0} & \multicolumn{1}{|c|}{\it 1} & 
\multicolumn{1}{|c|}{0} & \multicolumn{1}{|c|}{\it 1} & 
\multicolumn{1}{|c|}{0} & \multicolumn{1}{|c|}{\it 1}  \\
\Xcline{1-2}{0.8pt}\Xcline{3-3}{1.6pt}\Xcline{4-8}{0.8pt}
\multicolumn{1}{|c|}{1} &
\multicolumn{1}{|c|}{\it 0} &
1 & \multicolumn{1}{c|}{0} & \multicolumn{1}{|c|}{\it 1} & 
\multicolumn{1}{|c|}{0} & \multicolumn{1}{|c|}{\it 1} & 
\multicolumn{1}{|c|}{0}   \\
\Xcline{1-3}{0.8pt}\Xcline{4-4}{1.6pt}\Xcline{5-8}{0.8pt}
\multicolumn{1}{|c|}{\it 0} &
\multicolumn{1}{|c|}{1} &
\multicolumn{1}{|c|}{\it 0} & 1 & \multicolumn{1}{c|}{0} & \multicolumn{1}{|c|}{\it 1} & 
\multicolumn{1}{|c|}{0} & \multicolumn{1}{|c|}{\it 1}   \\
\Xcline{1-4}{0.8pt}\Xcline{5-5}{1.6pt}\Xcline{6-8}{0.8pt}
\multicolumn{1}{|c|}{1} &
\multicolumn{1}{|c|}{\it 0} &
\multicolumn{1}{|c|}{1} &
\multicolumn{1}{|c|}{\it 0} & 1 & \multicolumn{1}{c|}{0} & \multicolumn{1}{|c|}{\it 1} & 
\multicolumn{1}{|c|}{0}    \\
\Xcline{1-5}{0.8pt}\Xcline{6-6}{1.6pt}\Xcline{7-8}{0.8pt}
\multicolumn{1}{|c|}{\it 0} &
\multicolumn{1}{|c|}{1} &
\multicolumn{1}{|c|}{\it 0} &
\multicolumn{1}{|c|}{1} &
\multicolumn{1}{|c|}{\it 0} & 1 & \multicolumn{1}{c|}{0} & \multicolumn{1}{|c|}{\it 1}   \\
\Xcline{1-6}{0.8pt}\Xcline{7-7}{1.6pt}\Xcline{8-8}{0.8pt}
\multicolumn{1}{|c|}{1} &
\multicolumn{1}{|c|}{\it 0} &
\multicolumn{1}{|c|}{1} &
\multicolumn{1}{|c|}{\it 0} &
\multicolumn{1}{|c|}{1} &
\multicolumn{1}{|c|}{\it 0} & 1 & \multicolumn{1}{c|}{0}  \\
\Xcline{1-7}{0.8pt}\Xcline{8-8}{1.6pt}
\multicolumn{1}{|c|}{\it 0} &
\multicolumn{1}{|c|}{1} &
\multicolumn{1}{|c|}{\it 0} &
\multicolumn{1}{|c|}{1} &
\multicolumn{1}{|c|}{\it 0} &
\multicolumn{1}{|c|}{1} &
\multicolumn{1}{|c|}{\it 0} & 1  \\
\hline
\end{array}$$
\label{suprri}
\caption{A critical set of size $28$ in $B_8$ (entries shown in italics)}
\end{figure}

Our next aim is to prove that there is no critical set in $B_{2m}$ of larger size. 

Since there are $m$ $1$'s and $m$ $0$'s in each row and column, we have the following: 
\begin{eqnarray}
m-\sum_{i=0}^{\lfloor (L-1)/2\rfloor}  s_{2i+1} & = & 0 \label{const1}\\   
m-\sum_{i=0}^{\lfloor (L'-1)/2\rfloor}  t_{2i+1} & = & 0 \label{const2}\\   
m-\sum_{i=1}^{\lfloor L/2\rfloor}  s_{2i} & = & 0 \label{const3} \\   
m-\sum_{i=1}^{\lfloor L'/2\rfloor}  t_{2i} & = & 0. \label{const4}
\end{eqnarray}

\begin{theorem}
The largest critical set in $B_{2m}$ has size $2m^2-m$. 
\label{upsup}
\end{theorem}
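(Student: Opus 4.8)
The plan is to prove the matching upper bound $|C|\le 2m^2-m$ for every critical set $C$ of $B_{2m}$; combined with the critical set of size $2m^2-m$ produced in Corollary~\ref{crrr}, this yields the theorem. So the whole content is an optimisation: among all walks $W$ permitted by Theorem~\ref{handier}, the all-ones walk $s_i=t_j=1$ maximises the size formula of Lemma~\ref{sizeformula}.

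First I would recast that formula. Reading the two sums of Lemma~\ref{sizeformula} as ``the $0$'s of $B_{2m}$ lying below $W$'' and ``the $1$'s lying above $W$'', and using that each parity class of cells carries total weight $2m^2$ (a consequence of \eqref{const1}--\eqref{const4}), a short manipulation gives $|C|=2m^2-\Sigma$, where \[\Sigma:=\sum_{1\le j\le i\le L}(-1)^{i+j}s_it_j\] is precisely the number of $1$'s minus the number of $0$'s among the cells weakly below $W$ (the diagonal blocks $L_{i,i}$, all filled with $1$, are included). Thus the claim reduces to showing $\Sigma\ge m$, with equality at the all-ones walk. This reformulation is the conceptual step; the surplus of $1$'s over $0$'s below the staircase is exactly $m$ at the optimum and can only grow as the walk coarsens.

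Proving $\Sigma\ge m$ is the crux. Writing $a_i=(-1)^is_i$ and $b_j=(-1)^jt_j$, constraints \eqref{const1}--\eqref{const4} say the alternating sums $\sum_i a_i$ and $\sum_j b_j$ both vanish, and $\Sigma=\sum_i a_i\sum_{j\le i}b_j$. For a fixed column partition, $\Sigma$ is a \emph{linear} function of the row-block sizes, and symmetrically in the columns, so its minimum over the feasible region is attained at an extreme configuration; I would run an exchange argument that repeatedly unbalances or merges blocks toward such an extreme while tracking $\Sigma$ and checking it never falls below $m$. It is also useful that the partial sums $\bigl(\sum_{k\le i}a_k,\ \sum_{k\le j}b_k\bigr)$ trace a closed lattice path, with $\Sigma$ equal, up to the positive term $\tfrac12\sum_i s_it_i$, to minus its signed area; this organises the casework.

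The step I expect to be the main obstacle, and the ingredient most easily overlooked, is that the exchange argument must exploit the \textbf{staircase coupling} $L\le L'\le L+1$ forced by Theorem~\ref{handier} (the walk alternates East and South between the two corners, after the normalisation that $(1,0)$ lies on $W$). The four sum-constraints alone do \emph{not} imply $\Sigma\ge m$: if one is allowed more column-blocks than the staircase permits (say, with $s=(5,1,1,5)$ and $m=6$, padding the column partition beyond $L'=L+1$), then concentrating column mass on an index where the corresponding alternating suffix sum of $s$ is large drives $\Sigma$ below $m$. Every legal partition, with the numbers of row- and column-blocks differing by at most one, keeps $\Sigma\ge m$. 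Hence the delicate point is to carry out the optimisation using the near-equality of these two block counts, rather than only \eqref{const1}--\eqref{const4}; I would handle the two cases $L'=L$ and $L'=L+1$ separately, the extra (necessarily zero-coefficient) terminal block accounting for the parity bookkeeping in the second case.
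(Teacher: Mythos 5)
Your reduction is correct and clean: writing $|C|=2m^2-\Sigma$ with $\Sigma=\sum_{1\le j\le i\le L}(-1)^{i+j}s_it_j$ (the surplus of $1$'s over $0$'s weakly below $W$) does follow from the fact that each parity class of blocks carries total weight $2m^2$, and the theorem is indeed equivalent to $\Sigma\ge m$. Your observation that the four alternating-sum constraints alone cannot give this, and that the coupling $L\le L'\le L+1$ must enter, is also correct and is a real insight. But the proof stops exactly where the work begins: the inequality $\Sigma\ge m$ is never established. ``I would run an exchange argument \dots and check it never falls below $m$'' is a plan, not an argument; the checking at the extreme configurations is the entire content of the theorem, and none of it is carried out.

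Moreover, the extreme-point strategy as described has a pitfall you do not address. A bilinear form on a product of polytopes is minimised at a vertex pair, but if you relax to $s_i,t_j\ge 0$ (keeping only the constraints \eqref{const1}--\eqref{const4} and $L'\in\{L,L+1\}$), the inequality is already false: for $L=L'=4$ the point $s=(m,m,0,0)$, $t=(0,0,m,m)$ satisfies all four constraints and gives $\Sigma=0$. So the argument must retain $s_i,t_j\ge 1$, whose vertices are the configurations with all block sizes equal to $1$ except at most one odd-indexed and one even-indexed entry in each of $s$ and $t$; verifying $\Sigma\ge m$ over all of these (in both cases $L'=L$ and $L'=L+1$, and for each placement of the enlarged blocks) is a genuine multi-case computation that is absent. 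For comparison, the paper avoids the reformulation entirely and instead maximises the size formula of Lemma~\ref{sizeformula} directly by Lagrange multipliers, splitting on the parity of $L$ and on whether $L'=L$ or $L'=L+1$, obtaining $|C|\le 2m^2(L-1)/L\le m(2m-1)$ in each case. Your route could work and is arguably more transparent, but as written it is an outline with the decisive step missing.
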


\begin{proof}
We know a critical set of such size exists by Corollary \ref{crrr}. 

We first consider a special case: $L$ is odd and $L'=L+1=2m$. Then we must have each $s_i$ and $t_j$ equal to $1$ except $s_{2k}=2$ for presciely one value of $k$. 
From Lemma \ref{sizeformula}, it then follows that $|C|=2m^2-m$.

In all other cases, we apply the method of Lagrange multipliers to maximize the formula given by 
Lemma \ref{sizeformula} subject to the constraints \ref{const1}, \ref{const2}, \ref{const3} and \ref{const4} given above.  
(Note that we don't assume the constraints: $s_i\geq 1$ and $t_j\geq 1$, $1\leq i\leq L$, $1\leq j\leq L'$.)   

In the case $L=L'$, this yields $s_1=s_2=\dots =s_{L-1}=s$ for some $s$
and  $t_2=t_3=\dots =t_{L}=t$  and $t_1=2m-(L-1)t$ for some $t$.
If $L$ is even, from the constraints it follows that $s_L=s$, $t_1=t$ and $s=t=2m/L$. 
From Lemma \ref{sizeformula}, 
we then have
$$|C|\leq (2m/L)^2(L-1)L/2=2m^2(L-1)/L\leq m(2m-1)$$
since $L \leq 2m$. 
If $L$ is odd, from the constraints it follows that $s_L=0$, $t_1=0$, so this reduces to the previous case. 

In the case $L$ is odd, $L'=L+1$ and $L+1\leq 2m-2$, by allowing $s_{L+1}$ to be defined, 
the upper bound in the previous paragraph can be applied, so that 
$$|C|\leq 2m^2L/(L+1)\leq m^2(2m-3)/(m-1)<2m^2-m.$$

Otherwise $L'=L+1$ and $L$ is even. 
Again, we apply the Lagrangian method to obtain $s_1=s_2=\dots =s_L=s$ for some $s$ and 
$t_2=t_3=\dots = t_L$ for some $t$. 
From the constraints, $t_1+t_{L+1}=t$. 
Thus $|C|\leq stL(L-2)/2+ (t_1+t_{L+1})m=stL(L-2)/2+tm.$ 
But $s,t\leq 2m/L$ so 
$$|C|\leq 2m^2(L-1)/L \leq m(2m-1).$$
\end{proof}

  \let\oldthebibliography=\thebibliography
  \let\endoldthebibliography=\endthebibliography
  \renewenvironment{thebibliography}[1]{%
    \begin{oldthebibliography}{#1}%
      \setlength{\parskip}{0.4ex plus 0.1ex minus 0.1ex}%
      \setlength{\itemsep}{0.4ex plus 0.1ex minus 0.1ex}%
  }%
  {%
    \end{oldthebibliography}%
  }


\end{document}